\newcommand{\Nn}{\mathbb{N}}
\newcommand{\Rr}{\mathbb{R}}
\newcommand{\Cc}{\mathbb{C}}
\newcommand{\Zz}{\mathbb{Z}}
\newcommand{\D}{\mathcal{D}}	
\newcommand{\K}{\mathcal{K}}	
\renewcommand{\L}{\mathcal{L}}	
\newcommand{\A}{\mathcal{A}} 	
\newcommand{\U}{\mathcal{U}}  	
\newcommand{\End}{\mathrm{End}}	
\newcommand{\R}{\mathcal{R}}	
\newcommand{\RC}{\mathcal{RC}}	
\newcommand{\tRC}{\widetilde{\RC}} 
\newcommand{\sym}{\mathrm{sym}}
\newcommand{\e}{\underline{e}}	
\newcommand{\m}{\underline{m}}	
\newcommand{\Poles}{\mathcal{P}_{\zeta}} 
\newcommand{\tsigma}{\mathrm{\tilde{\sigma}}}		
\newcommand{\id}{\mathrm{id}}
\newcommand{\Res}{\mathrm{Res}}			
\newcommand{\Wres}{\mathrm{Wres}}		
\newcommand{\Tr}{\mathrm{Tr}}			
\newcommand{\TR}{\mathrm{TR}}
\newcommand{\TRb}{\mathrm{TR}_{\mathrm{bising}}}	
\newcommand{\inv}{\mathrm{inv}}			
\newcommand{\scal}[2]{\langle #1, #2 \rangle}	
\newcommand{\ideal}[1]{\langle #1 \rangle} 	
\newcommand{\op}{\operatorname{op}} 		
\newcommand{\potimes}{\hat{\otimes}} 
\newcommand{\up}{\uparrow}
\newcommand{\down}{\downarrow}
\newcommand{\Id}{\operatorname{Id}}	
\newcommand{\iso}{\xrightarrow{\sim}}	
\newtheorem{Thm}{Theorem}[section]
\newtheorem{Lem}[Thm]{Lemma}
\newtheorem{Prop}[Thm]{Proposition}
\theoremstyle{definition}
\newtheorem{Def}[Thm]{Definition}
\newtheorem{Rem}[Thm]{Remark}
\newtheorem{Ass}[Thm]{Assumption}
\begin{document}
\setcounter{page}{1}


\title{On the $\eta$-function for bisingular pseudodifferential operators}

\author[Karsten Bohlen]{Karsten Bohlen}

\address{$^{1}$ Leibniz University Hannover, Germany ~\textsf{bohlen.karsten@math.uni-hannover.de}}


\subjclass[2000]{Primary 47G30; Secondary 46L80, 46L85.}

\keywords{pseudodifferential, bisingular, K-theory.}



\begin{abstract}
In this work we consider the $\eta$-invariant for pseudodifferential operators of tensor product type, also called bisingular pseudodifferential operators.
We study complex powers of classical bisingular operators. 
We prove the trace property for the Wodzicki residue of bisingular operators and show how the residues of the $\eta$-function
can be expressed in terms of the Wodzicki trace of a projection operator.
Then we calculate the $K$-theory of the algebra of $0$-order (global) bisingular operators. 
With these preparations we establish the regularity properties of the $\eta$-function at the origin for global bisingular operators which are self-adjoint, elliptic and of positive orders.
\end{abstract} \maketitle



\section{Introduction}


The theory of pseudodifferential operators is indispensible in the study of partial differential equations and index theory as they occur naturally when 
we consider differential operators and in the construction of parametrices of differential operators which are Fredholm.
In the landmark papers of Atiyah and Singer (cf. \cite{as}, \cite{asIV}) the authors consider 
tensor products of complexes of pseudodifferential operators.
Such tensor products are no longer contained in the ordinary H\"ormander's classes of pseudodifferential operators.
Here the calculus of bisingular operators is the correct class which allows for a systematic treatment of tensor products
while basic pseudodifferential techniques are still applicable. 
This class of operators contains tensor products of classical pseudodifferential operators as well as the external product
of such operators. 
The bisingular calculus was introduced in 1975 by L. Rodino, \cite{rodino}. 
In this paper we continue the study of the calculus. 
There are many questions still unresolved, for example it was outside the scope of the original papers of 
Atiyah and Singer to obtain an analogue of their index formula for pseudodifferential operators of tensor product type.
The main difficulty lies in the operator valued nature of the principal symbols and the non-commutativity of the symbol space.
Despite the importance of the index problem it is still unresolved (see also \cite{nr} for recent work).

In the work of Atiyah, Patodi and Singer (cf. \cite{aps2}) on the index formula for manifolds which do possess a boundary, the $\eta$-invariant enters.
The $\eta$-invariant can be defined in terms of a regularized trace of a given pseudodifferential operator (self-adjoint and of positive order).
Roughly speaking and in mild cases the $\eta$-function measures the difference between the number of the positive and the negative eigenvalues of a given self-adjoint operator.
In this connection Atiyah, Patodi and Singer considered the question of the regularity of the $\eta$-function at the origin and
established such a result for particular cases.
The complete solution was obtained later by Gilkey, \cite{gi2}. 


In the sequel we give an outline of the contents of this paper. 
The $\eta$-function for a self-adjoint, elliptic bisingular operator $A$ is defined via the bisingular canonical trace
\begin{align}
\eta(A, z) &:= \TRb A |A|^{-z + 1}, z \in \Cc. \label{eta}
\end{align}

The $\eta$-function is meromorphic with poles of first and possibly second order. 
In the standard case it is an important and non-trivial result that the $\eta$ function is regular and analytic
at the origin $z = 0$. 
For a proof see e.g. Gilkey \cite{gi2}, \cite{gi3} as well as the exposition due to Ponge, \cite{ponge}. 

We reiterate the definition of the spectral $\zeta$-function and introduce the $k$-th Wodzicki residue for bisingular operators.
It was defined in \cite{nr} as follows. For a given bisingular operator $A \in \Psi_{cl}^{m_1,m_2}(X_1 \times X_2)$ we
fix two positive, elliptic operators $Q_1 \in \Psi_{cl}^1(X_1), \ Q_2 \in \Psi_{cl}^1(X_2)$ and set
\begin{align}
\Wres^{(k)}(A) &:= \Res_{z=0}^k \Tr(A Q_1^{-z} \otimes Q_2^{-z}). \label{Wres}
\end{align}

Starting from this definition we show that $\Wres^2$ represents a trace on $\Psi_{cl}^{\Cc, \Cc}(X_1 \times X_2) / \Psi^{-\infty, -\infty}(X_1 \times X_2)$ 
by adapting the argument of Wodzicki, see e.g. \cite{kassel}. 

The proof of the regularity properties of the $\eta$-function relies ultimately on an important relationship between the residue of the $\eta$-function and the 
Wodzicki residue of certain projections.
These projections $\Pi_{+}(A)$ are called \emph{sectorial projections} in the non-selfadjoint case.
The result can be stated (in our context) as follows for an elliptic bisingular pseudodifferential operator $A$ of positive order which is self-adjoint
\begin{align}
\Res_{z = 0}^2 \eta(A, z) &= 2 \pi i \Wres^{2} \Pi_{+}(A) \label{proj} 
\end{align}

see also \cite{w1}. 

Hence this is expressed in terms of the Wodzicki residue of a projection operator in the pseudodifferential calculus.
The formula above holds for the case of self-adjoint bisingular pseudodifferential operators.
For two proofs that the sectorial projections of a classical elliptic pseudodifferential operator of positive order in the standard case is a pseudodifferential operator of order $\leq 0$ we refer to \cite{bclz} and \cite{grubb}. 
Since we are in this note only concerned with the regularity of the $\eta$-invariant for self-adjoint operators we do not investigate the sectorial projection for non-selfadjoint operators in the bisingular class. 
Using our earlier results on the $K$-theory we prove that the Wodzicki residue of any projection operator is $0$. 
This implies then the main result: The $\eta$-function for self-adjoint, positive order, elliptic 
bisingular operators can have at most first order poles at the origin. 




The paper is organized as follows. In Section 2 we recall elements of the theory of bisingular operator classes, the global calculus and the calculus
for smooth compact manifolds. 
In the third section we calculate the $K$-theory for the norm completions of the algebra of global bisingular operators. 
Then in Section 4 we consider complex powers proving that the \emph{classical} bisingular operators remain in the
classical bisingular class if we take their complex powers.
As a preparation for the discussions in the sequel we will introduce in Section 5 the canonical bisingular trace. 
In Section 6 we recall the definition of the bisingular Wodzicki residue and prove the trace property. 
We also define the spectral $\zeta$-function and the $\eta$-function in this context.
In the final section we give a proof of the holomorphicity properties of the $\eta$-function at the origin. 

\subsection*{Acknowledgements}

For helpful discussions I thank Ubertino Battisti, Magnus Goffeng, Elmar Schrohe and Ren\'e Schulz. 
Part of this research was conducted while I was a member of the Graduiertenkolleg GRK 1463 at Leibniz University of Hannover.
I thank the Deutsche Forschungsgemeinschaft (DFG) for their financial support.

\section{Bisingular operators}

\subsection{Global calculus}

In this section we introduce the terminology and notation for the rest of the paper.
The \emph{global bisingular pseudodifferential calculus} was introduced in the paper \cite{bgpr} and we recall the definition here.

We denote by $\Gamma_{cl}^{m_i}(\Rr^{n_i})$ for $i = 1,2$ the usual classical Shubin classes of symbols (cf. \cite{shubin}) and by
$G_{cl}^{m_i}(\Rr^{n_i})$ the pseudodifferential operators (the quantized symbols in $\Gamma_{cl}^{m_i}$). 

Denote by $\scal{x_i}{\xi_i} := (1 + |x_i|^2 + |\xi_i|^2)^{\frac{1}{2}}$ for $x_i, \ \xi_i \in \Rr^{n_i}$. 

\begin{Def}
The class of bisingular symbols $\Gamma^{m_1, m_2}(\Rr^{n_1 + n_2})$ consists of smooth functions
$a \colon \Rr^{2n_1 + 2n_2} \to \Cc$ with the following uniform estimates
\[
|D_{\xi_1}^{\alpha_1} D_{x_1}^{\beta_1} D_{\xi_2}^{\alpha_2} D_{x_2}^{\beta_2} a(x_1, \xi_1, x_2, \xi_2)| \leq C \scal{x_1}{\xi_1}^{m_1 - |\alpha_1| - |\beta_1|} \scal{x_2}{\xi_2}^{m_2 - |\alpha_2| - |\beta_2|}. 
\]

Furthermore, we set
\[
\Gamma^{-\infty, -\infty}(\Rr^{n_1 + n_2}) = \bigcap_{m_1, m_2} \Gamma^{m_1, m_2}(\Rr^{n_1 + n_2}). 
\]

\label{Def:global}
\end{Def}



Given such a symbol $a$ we have two maps
\[
(x_1, \xi_1) \mapsto a_1(x_1, \xi_1) := ((x_2, \xi_2) \mapsto a(x_1, \xi_1, x_2, \xi_2)) 
\]

and
\[
(x_2, \xi_2) \mapsto a_2(x_2, \xi_2) := ((x_1, \xi_1) \mapsto a(x_1, \xi_1, x_2, \xi_2)). 
\]

Hence $a_1 \in \Gamma^{m_2}(\Rr^{n_2}, \Gamma^{m_1}(\Rr^{n_1}))$ 
and $a_2 \in \Gamma^{m_1}(\Rr^{n_1}, \Gamma^{m_2}(\Rr^{n_2}))$. 

The subclass of bisingular classical symbols is denoted by $\Gamma_{cl}^{m_1, m_2}$ and obtained by using in the above definition the classical Shubin classes.
We will later give an alternative definition of classical symbols based on radial compactifications. 


We have two principal symbols
\begin{align}
\sigma_1^{m_1}(A) &= a_1^{(m_1)} \in C^{\infty}(S^{2n_1 - 1}, G_{cl}^{m_2}(\Rr^{n_2})), \label{p1} \\
\sigma_2^{m_2}(A) &= a_2^{(m_2)} \in C^{\infty}(S^{2n_2 - 1}, G_{cl}^{m_1}(\Rr^{n_1})). \label{p2} 
\end{align}

The principal symbols have the following properties for $A \in G_{cl}^{m_1, m_2}(\Rr^{n_1 + n_2}), \ B \in G_{cl}^{p_1, p_2}(\Rr^{n_1 + n_2})$

\begin{align*}
\sigma_i^{m_i + p_i}(A \cdot B) = \sigma_i^{m_i}(A) \cdot \sigma_i^{p_i}(B), \\ 
\sigma_i^{m_i}(A^{\ast}) = \sigma_i^{m_i}(A)^{\ast}, i = 1,2. 
\end{align*}



Fix the notation $\sigma_{\Rr^{n_1}}, \sigma_{\Rr^{n_2}}$ for the principal symbol map of $G_{cl}^{m_1}(\Rr^{n_1})$ and $G_{cl}^{m_2}(\Rr^{n_2})$
respectively. 
Then define in each case the pointwise principal symbol maps
\begin{align*}
&\tilde{\sigma}_{\Rr^{n_1}} \colon C^{\infty}(S^{2n_2 - 1}, G_{cl}^{m_1}(\Rr^{n_1})) \to C^{\infty}(S^{2n_1 - 1} \times S^{2n_2 - 1}), \\
&\tilde{\sigma}_{\Rr^{n_1}}(F)(x_1, \xi_1, x_2, \xi_2) := \sigma_{\Rr^{n_1}}(F(x_2, \xi_2))(x_1, \xi_1), \ F \in C^{\infty}(S^{2n_2-1}, G_{cl}^{m_1}(\Rr^{n_1})), \\
&\tilde{\sigma}_{\Rr^{n_2}} \colon C^{\infty}(S^{2n_1 - 1}, G_{cl}^{m_2}(\Rr^{n_2})) \to C^{\infty}(S^{2n_1 - 1} \times S^{2n_2 - 1}), \\
&\tilde{\sigma}_{\Rr^{n_1}}(G) := \sigma_{\Rr^{n_2}}(G(x_1, \xi_1))(x_2, \xi_2), \ G \in C^{\infty}(S^{2n_1 - 1}, G_{cl}^{m_2}(\Rr^{n_2}).  
\end{align*}

Note that by nuclearity we have 
\begin{align*}
&C^{\infty}(S^{2n_1 - 1}, G_{cl}^{m_2}(\Rr^{n_2})) \cong C^{\infty}(S^{2n_1 - 1}) \potimes G_{cl}^{m_2}(\Rr^{n_2}), \\
&C^{\infty}(S^{2n_2 - 1}, G_{cl}^{m_1}(\Rr^{n_1})) \cong C^{\infty}(S^{2n_2 - 1}) \potimes G_{cl}^{m_1}(\Rr^{n_1}) 
\end{align*}

and the pointwise symbol maps are also given by
\[
\tilde{\sigma}_{\Rr^{n_1}} = \id_{C^{\infty}(S^{2n_2 -1})} \otimes \sigma_{\Rr^{n_1}}, \ \tilde{\sigma}_{\Rr^{n_2}} = \sigma_{\Rr^{n_2}} \otimes \id_{C^{\infty}(S^{2n_1 - 1})}. 
\]

The following \emph{compatibility condition} holds
\begin{align}
& \sigma_{\Rr^{n_2}}(\sigma_1^{m_1}(A)(x_1, \xi_1))(x_2, \xi_2) = \sigma_{\Rr^{n_1}}(\sigma_2^{m_2}(A)(x_2, \xi_2))(x_1, \xi_1) \notag \\
&= \sigma^{m_1, m_2}(A)(x_1, \xi_1, x_2, \xi_2) = a_{m_1, m_2}(x_1, \xi_1, x_2, \xi_2). \label{comp}
\end{align}

\begin{Def}
Let $\Sigma^{m_1, m_2}$ be the set of all pairs 
\[
(F, G) \in C^{\infty}(S^{2n_1-1}, G_{cl}^{m_2}(\Rr^{n_2})) \oplus C^{\infty}(S^{2n_2-1}, G_{cl}^{m_1}(\Rr^{n_1})) 
\]
such that 
\begin{align*}
\tsigma_{\Rr^{n_2}}(F) &= \tsigma_{\Rr^{n_1}}(G).
\end{align*}

Let $(F_1, G_1) \in \Sigma^{m_1, m_2}, \ (F_2, G_2) \in \Sigma^{p_1, p_2}$ and set
\begin{align*}
(F_2, G_2) \circ (F_1, G_1) &:= (F_2 \circ_{2} F_1, G_2 \circ_{1} G_1) \in \Sigma^{m_1 + p_1, m_2 + p_2}.
\end{align*}

Here
\begin{align*}
(F_2 \circ_{2} F_1)(x_1, \xi_1) &:= F_2(x_1, \xi_1) \circ_{\Rr^{n_2}} F_1(x_1, \xi_1), \\
(G_2 \circ_1 G_1)(x_2, \xi_2) &:= G_2(x_2, \xi_2) \circ_{\Rr^{n_1}} G_1(x_2, \xi_2)
\end{align*}

where $\circ_{\Rr^{n_2}}$ denotes the operator product $G_{cl}^{m_2}(\Rr^{n_2}) \times G_{cl}^{p_2}(\Rr^{n_2}) \to G_{cl}^{m_2 + p_2}(\Rr^{n_2})$ and for $\circ_{\Rr^{n_1}}$ analogously.
\label{Def:symbspace}
\end{Def}

We introduce the appropriate Sobolev spaces for bisingular operators.
\begin{Def}
We define the Sobolev space as the completion 
\[
Q^{s,t}(\Rr^{n_1 + n_2}) = \overline{S(\Rr^{n_1 + n_2})}^{\|\cdot\|_{s,t}}, \ s, t \in \Rr
\]

where the norm is given by
\[
\|u\|_{s,t} := \|\Lambda^{s,t} u\|_{L^2(\Rr^{n_1 + n_2})}. 
\]

Here $\Lambda^{s,t} := \Lambda_{n_1}^{s} \otimes \Lambda_{n_2}^{t}$ and $\Lambda_{n_1}^{s}, \ \Lambda_{n_2}^{t}$ are invertible operators
in the Shubin classes $G_{cl}^{s}(\Rr^{n_1})$ and $G_{cl}^{t}(\Rr^{n_2})$ respectively. 
\label{Def:Sobolev}
\end{Def}

\begin{Prop}[cf. \cite{bgpr}]
Let $P \in G_{cl}^{m_1, m_2}(\Rr^{n_1 + n_2})$ then $P$ has a continuous linear extension 
\[
P \colon Q^{s, t}(\Rr^{n_1+n_2}) \to Q^{s-m_1, t-m_2}(\Rr^{n_1 + n_2}).
\]
\label{Prop:cont}
\end{Prop}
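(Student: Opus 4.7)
The overall plan is to reduce the claim to the endpoint case $L^{2}$-boundedness for operators of bi-order $(0,0)$, and then to prove that endpoint case by the standard Hörmander/Calderón--Vaillancourt square-root trick adapted to the bisingular calculus.

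First I would set up the reduction. By Definition~\ref{Def:Sobolev}, the map $\Lambda^{s,t}\colon Q^{s,t}(\Rr^{n_{1}+n_{2}})\to L^{2}(\Rr^{n_{1}+n_{2}})$ is an isometric isomorphism, and $(\Lambda^{s,t})^{-1}$ lies in $G_{cl}^{-s,-t}(\Rr^{n_{1}+n_{2}})$ by invertibility of the $\Lambda_{n_{i}}$ in the Shubin classes. Consider the conjugated operator
\[
\widetilde{P}:=\Lambda^{s-m_{1},\,t-m_{2}}\,P\,(\Lambda^{s,t})^{-1}.
\]
Continuity of $P\colon Q^{s,t}\to Q^{s-m_{1},t-m_{2}}$ is then equivalent, via these isomorphisms, to $L^{2}\to L^{2}$ boundedness of $\widetilde{P}$.

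Next I would invoke the composition rules of the global bisingular calculus from \cite{bgpr}: products in $G_{cl}^{\bullet,\bullet}$ add bi-orders. Since $\Lambda^{s-m_{1},t-m_{2}}\in G_{cl}^{s-m_{1},t-m_{2}}$, $P\in G_{cl}^{m_{1},m_{2}}$ and $(\Lambda^{s,t})^{-1}\in G_{cl}^{-s,-t}$, we get $\widetilde{P}\in G_{cl}^{0,0}(\Rr^{n_{1}+n_{2}})$. Thus the proof reduces to showing that every operator in $G_{cl}^{0,0}$ is bounded on $L^{2}$.

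For the base case I would follow the classical scheme. Any $R\in G^{-\infty,-\infty}$ has Schwartz kernel in $\mathcal{S}(\Rr^{2(n_{1}+n_{2})})$, hence is Hilbert--Schmidt and $L^{2}$-bounded. For general $A\in G_{cl}^{0,0}$, pick a constant $M>0$ with $M^{2}>\sup|\sigma^{0,0}(A^{*}A)|$ (supremum taken over the compatible pair of principal symbols in $\Sigma^{0,0}$, which are operator-valued but jointly bounded by compactness of $S^{2n_{i}-1}$ and Shubin $L^{2}$-boundedness of the pointwise symbols). Then $M^{2}-A^{*}A\in G_{cl}^{0,0}$ is elliptic with positive principal symbols in both slots; the parametrix/symbolic square-root construction within the bisingular calculus (as in the elliptic theory of \cite{bgpr}) yields $B\in G_{cl}^{0,0}$ with $B^{*}B=M^{2}-A^{*}A+R$ where $R\in G^{-\infty,-\infty}$. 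Pairing with $u\in\mathcal{S}$ gives
\[
\|Au\|_{L^{2}}^{2}=M^{2}\|u\|_{L^{2}}^{2}-\|Bu\|_{L^{2}}^{2}-\langle Ru,u\rangle\leq (M^{2}+\|R\|_{L^{2}\to L^{2}})\|u\|_{L^{2}}^{2},
\]
and density of $\mathcal{S}$ yields the continuous extension.

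The main obstacle is the square-root step: the compatibility condition \eqref{comp} forces the would-be square root to respect both principal-symbol slots simultaneously, and each of these takes values in an operator algebra of Shubin pseudodifferential operators rather than in scalars. Checking that the Seeley-type symbolic iteration can be carried out coherently in $\Sigma^{0,0}$ (so that positivity of operator-valued symbols of the two slots can be used to extract square roots that are themselves compatible pairs) is the delicate point; all subsequent steps are formal consequences of the calculus in \cite{bgpr}.
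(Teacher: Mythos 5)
First, a point of reference: the paper does not prove Proposition \ref{Prop:cont} at all; it is quoted from \cite{bgpr} (``cf.''), so there is no internal proof to compare against. Judged on its own terms, your reduction step is sound: $\Lambda^{s,t}$ and its inverse are tensor products of invertible Shubin operators, hence lie in $G_{cl}^{s,t}$ and $G_{cl}^{-s,-t}$, and the composition rules of the global calculus put $\widetilde{P}$ in $G_{cl}^{0,0}$, so everything hinges on $L^{2}$-boundedness of bi-order $(0,0)$ operators.

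That endpoint step is where you have a genuine gap, and you say so yourself: the operator-valued square-root construction in the bisingular calculus is asserted, not carried out, and it is not routine. The concrete difficulty is the doubly graded expansion: if you run a Seeley-type iteration correcting the symbol in the first index only, the remainder lands in $G^{-\infty,0}$, and such an operator is \emph{not} known to be $L^{2}$-bounded at this stage of the argument --- proving that is essentially the original problem in the second factor. To get a remainder in $G^{-\infty,-\infty}$ (or of sufficiently negative finite bi-order to be Hilbert--Schmidt) you must iterate jointly in both indices, respecting the compatibility condition \eqref{comp} and handling the corner terms, in the style of the bisingular parametrix with excision functions $\psi_{1},\psi_{2},\psi_{1}\psi_{2}$; none of this is set up in your sketch. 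The irony is that the machinery is unnecessary for the \emph{global} calculus: a symbol $a\in\Gamma^{0,0}(\Rr^{n_{1}+n_{2}})$ satisfies $|D^{\alpha_{1}}_{\xi_{1}}D^{\beta_{1}}_{x_{1}}D^{\alpha_{2}}_{\xi_{2}}D^{\beta_{2}}_{x_{2}}a|\leq C_{\alpha\beta}$ uniformly (the weights $\scal{x_{i}}{\xi_{i}}$ only improve matters), i.e.\ $a$ lies in $S^{0}_{0,0}(\Rr^{n_{1}+n_{2}})$ in the joint variables, so the Calder\'on--Vaillancourt theorem gives $L^{2}$-boundedness immediately, with no bisingular square root and no ellipticity hypothesis. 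Replacing your base case by this observation (keeping your conjugation argument for general $(s,t)$) yields a complete and much shorter proof, consistent with how the result is obtained in \cite{bgpr}.
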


\begin{Rem}
It is not hard to show that we have the isomorphism
\[
Q^{s}(\Rr^{n_1}) \potimes Q^{t}(\Rr^{n_2}) \cong Q^{s, t}(\Rr^{n_1 + n_2})
\]

for the Sobolev spaces on $\Rr^{n_i}, \ i = 1,2$ where by $\potimes$ we denote the completed projective tensor product. 
\label{Rem:Sobolev}
\end{Rem}

\subsection{Smooth, compact manifolds}

On smooth compact manifolds we define the calculus of bisingular pseudodifferential operators. 
We refer the reader to the paper \cite{rodino} for the details.

\begin{Def}
Let $\Omega_i \subset \Rr^{n_i}$ be open sets for $i = 1,2$. 
Then $a \in S^{m_1, m_2}(\Omega_1 \times \Omega_2)$ if $a \in C^{\infty}(T^{\ast} \Omega_1 \times T^{\ast} \Omega_2)$
such that we have the uniform estimates
\[
|D_{x_1}^{\beta_1} D_{\xi_1}^{\alpha_1} D_{x_2}^{\beta_2} D_{\xi_2}^{\alpha_2} a(x_1, \xi_1, x_2, \xi_2)| \leq C \ideal{\xi_1}^{m_1 - |\alpha_1|} \ideal{\xi_2}^{m_2 - |\alpha_2|}. 
\]

\label{Def:symbols}
\end{Def}

A linear operator $A \colon C_c^{\infty}(\Omega_1 \times \Omega_2) \to C_c^{\infty}(\Omega_1 \times \Omega_2)$ is a \emph{bisingular operator}
if 
\[
(Au)(x_1, x_2) = \frac{1}{(2\pi)^{n_1 + n_2}} \int_{\Rr^{n_1}} \int_{\Rr^{n_2}} e^{ix_1 \xi_ + i x_2 \xi_2} a(x_1, \xi_1, x_2, \xi_2) \hat{u}(\xi_1, \xi_2) \,d\xi_1 \,d\xi_2
\]

for a symbol $a \in S^{m_1, m_2}(\Omega_1 \times \Omega_2)$. 

The subclass $S_{cl}^{m_1, m_2}(\Omega_1 \times \Omega_2)$ denotes the \emph{classical symbols} having a bihomogenous expansion.
Then $\Psi_{cl}^{m_1, m_2}(\Omega_1 \times \Omega_2)$ denote the classical bisingular operators. 


In analogy to the last section we have the symbols defined for $A \in \Psi_{cl}^{m_1, m_2}(\Omega_1 \times \Omega_2)$ as follows
\begin{align*}
&\sigma_1^{m_1}(A) \colon T^{\ast} \Omega_1 \setminus \{0\} \to \Psi_{cl}^{m_2}(\Omega_2), \\
&(x_1, \xi_1) \mapsto a_{m_1, \cdot}(x_1, \xi_1, x_2, D_2), \\
&\sigma_2^{m_2}(A) \colon T^{\ast} \Omega_2 \setminus \{0\} \to \Psi_{cl}^{m_1}(\Omega_1), \\
&(x_2, \xi_2) \mapsto a_{\cdot, m_2}(x_1, D_1, x_2, \xi_2), \\
&\sigma^{m_1, m_2}(A) \colon T^{\ast} \Omega_1 \setminus \{0\} \times T^{\ast} \Omega_2 \setminus \{0\} \to \Cc, \\
&(x_1, \xi_1, x_2, \xi_2) \mapsto a_{m_1, m_2}(x_1, \xi_1, x_2, \xi_2). 
\end{align*}

Additionally, the same compatibility condition \eqref{comp} holds. 

We recall a definition of classical bisingular operators in terms of radial compactifications due to Nicola, Rodino \cite{nr}. 

For this embed $\Rr^{n_i}$ into $S_{+}^{n_i} = \{(\xi', \xi_{n_i + 1}) \in \Rr^{n_i + 1} | |\xi| = 1, \ \xi_{n_i + 1} > 0\}$ via
the homeomorphism given by
\[
\RC_i(\xi) = \left(\frac{\xi}{\ideal{\xi}}, \frac{1}{\ideal{\xi}}\right).
\]

The inverse is given by $\RC^{-1}(z_0, z) = \frac{z_0}{z}$. 

We also define the maps $\tRC_i = \Id \times \RC_i$ which act on the cotangent space $T^{\ast} \Omega_i = \Omega_i \times \Rr^{n_i}$
and map to $S_{+}^{\ast} \Omega_i = \Omega_i \times S_{+}^{n_i}$. 

Fix the projection mapping 
\[
\pi \colon S_{+}^{\ast} \Omega_1 \times S_{+}^{\ast} \Omega_2 \to S_{+}^{n_1 + n_2}.
\]

We therefore consider two manifolds with corners which each consist of two hypersurfaces. For the first case
\[
\begin{cases} S^{n_1 - 1} \times S_{+}^{n_2}, \\
S_{+}^{n_1} \times S^{n_2 - 1} \end{cases}
\]

and we fix the boundary defining functions denoted $\rho_1$ and $\rho_2$ \footnote{i.e. $\rho_i \geq 0$ are smooth such that $\{\rho_i = 0\} = \begin{cases} S^{n_1 - 1} \times S_{+}^{n_2}, & i = 1\\ 
S_{+}^{n_1} \times S^{n_2 - 1}, & i = 2 \end{cases}$ and the $1$-form $d \rho_i$ is non-vanishing on the corresponding boundary hypersurface for $i = 1,2$.}.

For $S_{+}^{\ast} \Omega_1 \times S_{+}^{\ast} \Omega_2$ we have the two boundary hypersurfaces
\[
\begin{cases} \partial(S_{+}^{\ast} \Omega_1) \times S_{+}^{\ast} \Omega_2 = (\Omega_1 \times S^{n_1 - 1}) \times S_{+}^{\ast} \Omega_2, \\
S_{+}^{\ast} \Omega_1 \times \partial(S_{+}^{\ast} \Omega_2) = S_{+}^{\ast} \Omega_1 \times (\Omega_2 \times S^{n_2 - 1})
\end{cases}
\]

and we define the boundary defining functions $\tilde{\rho}_i := \pi^{\ast} \rho_i, \ i = 1,2$. 

We obtain a commuting diagram
\[
\xymatrix{
S_{+}^{\ast} \Omega_1 \times \partial(S_{+}^{\ast} \Omega_2) \ar[d]_{\pi} & & \ar[l(1.4)]^-{\tRC_1 \times \Id} T^{\ast} \Omega_1 \times \partial(S_{+}^{\ast} \Omega_2) \ar[d]_{\pi_0} \\
S_{+}^{n_1} \times S^{n_2 - 1} & & \ar[l(1.5)]^-{\RC_1 \times \Id} \Rr^{n_1} \times S^{n_2 - 1}. 
}
\]

The boundary defining function $\rho_1$ is written
\[
((\RC_1 \times \Id)^{\ast} \rho_1)(\xi_1, \omega_2) = |\xi_1|^{-1}, \ |\xi_1| \geq 1, \ \omega_2 \in S_{+}^{n_2}.
\]

We obtain the commuting diagram
\[
\xymatrix{
\partial (S_{+}^{\ast} \Omega_1) \times S_{+}^{\ast} \Omega_2 \ar[d]_{\pi} & & \ar[l(1.4)]^-{\Id \times \tRC_2} \partial (S_{+}^{\ast} \Omega_1) \times T^{\ast} \Omega_2 \ar[d]_{\pi_0} \\
S^{n_1-1} \times S_{+}^{n_2} & & \ar[l(1.5)]^-{\Id \times \RC_2} S^{n_1-1} \times \Rr^{n_2}. 
}
\]

The boundary defining function $\rho_2$ is written
\[
((\Id \times \RC_2)^{\ast} \rho_2)(\omega_1, \xi_2) = |\xi_2|^{-1}, \ |\xi_2| \geq 1, \ \omega_1 \in S_{+}^{n_1}. 
\]

On the manifold with corners $S_{+}^{\ast} \Omega \times S_{+}^{\ast} \Omega_2$ we define the induced smooth structure
of the smooth manifold $S^{\ast} \Omega_1 \times S^{\ast} \Omega_2$. 
In particular we have the actions on functions coming from the radial compactification map and the inclusion $i \colon S_{+}^{\ast} \Omega \times S_{+}^{\ast} \Omega_2 \hookrightarrow S^{\ast} \Omega_1 \times S^{\ast} \Omega_2$
summarized as follows

\[
\xymatrix{
C^{\infty}(S^{\ast} \Omega_1 \times S^{\ast} \Omega_2) \ar[d]_{i^{\ast}} & & \\
C^{\infty}(S_{+}^{\ast} \Omega_1 \times S_{+}^{\ast} \Omega_2) \ar[r(1.4)]^-{(\tRC_1 \times \tRC_2)_{\ast}} & & C^{\infty}(T^{\ast} \Omega_1 \times T^{\ast} \Omega_2). 
}
\]

Given the actions on functions we obtain the commuting diagram
\[
\xymatrix{
S_{+}^{\ast} \Omega_1 \times S_{+}^{\ast} \Omega_2 \ar[d]_{\pi} & & \ar[l(1.4)]^-{\tRC_1 \times \tRC_2} T^{\ast} \Omega_1 \times T^{\ast} \Omega_2 \ar[d]_{\pi_0} \\
S_{+}^{n_1} \times S_{+}^{n_2} & & \ar[l(1.5)]^-{\RC_1 \times \RC_2} \Rr^{n_1} \times \Rr^{n_2}. 
}
\]

This puts us in a position to give another definition of classical bisingular operators.
\begin{Def}
The classical bisingular pseudodifferential symbols space for orders $(m_1, m_2) \in \Rr^2$ is defined as
\[
S_{cl}^{m_1, m_2}(\Omega_1 \times \Omega_2) = (\tRC_1 \times \tRC_2)^{\ast} \tilde{\rho}_1^{-m_1} \tilde{\rho}_2^{-m_2} C^{\infty}(S_{+}^{\ast} \Omega_1 \times S_{+}^{\ast} \Omega_2).
\]
\label{Def:classical}
\end{Def}

The bisingular smoothing terms are identified with the smooth functions which vanish to all orders on the boundary hyperfaces.

\[
\Psi^{-\infty, -\infty}(\Omega_1 \times \Omega_2) \cong C^{\infty}(\Omega_1 \times \Omega_2; S(\Rr^{n_1 + n_2})) \cong \dot{C}^{\infty}(S_{+}^{\ast} \Omega_1 \times S_{+}^{\ast} \Omega_2). 
\]

We can desribe the operator valued principal symbols for a given $A = \op(a)$.
First let $\tilde{a} \in \tilde{\rho}_1^{-m_1} \tilde{\rho}_2^{-m_2} C^{\infty}(S_{+}^{\ast} \Omega_1 \times S_{+}^{\ast} \Omega_2)$
be the corresponding function.
Then we have a Taylor expansion
\[
\tilde{a} = \sum_{j \leq m_1} \tilde{a}_j \tilde{\rho}_1^{-j}
\]

with the coefficients defined on the boundary hypersurface $\{\tilde{\rho}_1 = 0\}$. 

Then consider the function
\[
(\Id \times \RC_2)^{\ast} \tilde{a}_j \colon (\Omega_1 \times S^{n_1 - 1}) \times T^{\ast} \Omega_2 \to \Cc
\]

and extend this to a function
\[
(\Id \times \tRC_2)^{\ast} \tilde{a}_j \colon (\Omega_1 \times S^{n_1 - 1}) \times T^{\ast} \Omega_2 \to \Cc
\]

which is homogenous of degree $j$ with regard to $\xi_1 \in \Rr^{n_1} \setminus \{0\}$. 
The first principal symbol is then given by
\[
\sigma_1^j(A)(x_1, \xi_1) := ((\Id \times \tRC_2)^{\ast} \tilde{a}_j)(x_1, \xi_1, x_2, D_{x_2}) \in \Psi_{cl}^{m_2}(\Omega_2). 
\]

An analogous construction yields a definition of the second principal symbol $\sigma_2^j(A)$. 

The so defined principal symbols are invariant under changes of coordinates as can be shown (\cite{rodino}).
We obtain in this way a calculus $\Psi^{m_1, m_2}(X_1 \times X_2)$ of pseudodifferential operators on two 
closed compact manifolds $X_1, X_2$. 

Fix a Riemmanian metric $g_i$ on $X_i$ for $i =1,2$ and define $\ideal{\xi} = (1 + |\xi|_{g_i}^2)^{\frac{1}{2}}$ for $i = 1,2$. 


\begin{Def}
Let $X_1, X_2$ be two closed, compact manifolds. We introduce the Sobolev space $H^{s_1, s_2}(X_1 \times X_2)$ as the space
\[
H^{s_1, s_2}(X_1 \times X_2) = \{u \in S'(X_1 \times X_2) : \op(\ideal{\xi_1}^{s_1} \ideal{\xi_2}^{s_2} u) \in L^2(X_1 \times X_2) \}.
\]

With the norm
\[
\|u\|_{s_1, s_2} = \|\op(\ideal{\xi_1}^{s_1} \ideal{\xi_2}^{s_2} u\|_{2} 
\]

for $u \in H^{s_1, s_2}(X_1 \times X_2)$. 
\label{Def:Sobolev}
\end{Def}

It is not hard to show that we have the isomorphism
\[
H^{s_1}(X_1) \potimes H^{s_2}(X_2) \cong H^{s_1, s_2}(X_1 \times X_2)
\]

for the Sobolev spaces on the manifolds $X_i$ which is a Hilbert space tensor product. 
In particular the usual continuity properties hold as well as the immediate analog of Prop. \ref{Prop:cont}. 
Additionally, the space $C^{\infty}(X_1 \times X_2) \cong C^{\infty}(X_1) \potimes C^{\infty}(X_2)$ is the projective limit of the scale of Sobolev spaces.


\section{The $K$-theory}
\label{Kthy}

\subsection{Comparison algebras}

In this section we consider the $C^{\ast}$-algebras obtained by completing the order $(0,0)$ bisingular pseudodifferential operators.
We first discuss these algebras for the calculus on smooth, compact manifolds where it is understood that our results
apply immediately also to the global bisingular calculus.
For the algebras obtained from the global calculus we then calculate the $K$-theory explicitly.

\begin{Lem}
\emph{i)} Let $X_1, X_2$ be two smooth compact manifolds. The $\L(L^2)$-completion of $\Psi_{cl}^{-1,-1}(X_1 \times X_2)$ yields 
\[
\overline{\Psi_{cl}^{-1,-1}} \cong \K := \K(L^2(X_1 \times X_2))
\]

the algebra of compact operators on $L^2$. 

\emph{ii)} We have an isomorphism
\[
\K \cong \K_1 \otimes \K_2
\]

where $\K_i := \K(L^2(X_i)), \ i=1,2$ and $\otimes$ denotes any completed $C^{\ast}$-tensor product.

\label{Lem:tensor}
\end{Lem}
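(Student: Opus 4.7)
The plan is to prove (i) by establishing both inclusions $\overline{\Psi_{cl}^{-1,-1}} \subseteq \K$ and $\K \subseteq \overline{\Psi_{cl}^{-1,-1}}$, and then to deduce (ii) from the standard identification of compact operators on a Hilbert tensor product.

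For the forward inclusion in (i), I would invoke the closed-manifold analog of Proposition \ref{Prop:cont}: any $A \in \Psi_{cl}^{-1,-1}(X_1 \times X_2)$ extends to a bounded operator $L^2(X_1 \times X_2) \to H^{1,1}(X_1 \times X_2)$. Writing $H^{1,1}(X_1 \times X_2) \cong H^1(X_1) \potimes H^1(X_2)$ as recorded after Definition \ref{Def:Sobolev}, and using the Rellich compactness of $H^1(X_i) \hookrightarrow L^2(X_i)$ on each closed compact $X_i$, the inclusion $H^{1,1} \hookrightarrow L^2$ is the Hilbert tensor product of two compact operators and is therefore compact. Composing, $A$ acts compactly on $L^2$, and since $\K$ is norm-closed the inclusion $\overline{\Psi_{cl}^{-1,-1}} \subseteq \K$ follows.

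For the reverse inclusion I would use that $\Psi^{-\infty,-\infty}(X_1 \times X_2) \subseteq \Psi_{cl}^{-1,-1}(X_1 \times X_2)$ together with the identification of the smoothing ideal on the closed manifold as the operators with $C^{\infty}$ Schwartz kernels (the compact analog of the description following Definition \ref{Def:classical}). Rank-one operators $u \mapsto \scal{u}{g} f$ with $f, g \in C^{\infty}(X_1 \times X_2)$ are smoothing bisingular operators, their finite sums exhaust all finite-rank operators with smooth kernels, and these are norm-dense in $\K(L^2(X_1 \times X_2))$ by the standard density of finite-rank operators in $\K$ combined with $C^{\infty}$-density in $L^2$. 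Hence $\K \subseteq \overline{\Psi^{-\infty,-\infty}} \subseteq \overline{\Psi_{cl}^{-1,-1}}$.

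Part (ii) is then a standard fact: for Hilbert spaces $H_1, H_2$ there is a canonical $\ast$-isomorphism $\K(H_1 \potimes H_2) \cong \K(H_1) \otimes \K(H_2)$, where the $C^{\ast}$-tensor norm on the right is unambiguous because $\K$ is nuclear (in fact type I). The isomorphism is defined on the algebraic tensor product by sending $T_1 \otimes T_2$ to the operator acting as $T_1 \otimes T_2$ on simple tensors, extended by continuity, with surjectivity following from the density in $\K(H_1 \potimes H_2)$ of finite linear combinations of rank-one operators $v \mapsto \scal{v}{\psi_1 \otimes \psi_2}(\phi_1 \otimes \phi_2)$.

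The step that requires the most care is the compactness of $H^{1,1}(X_1 \times X_2) \hookrightarrow L^2(X_1 \times X_2)$: this is tensorial rather than a single Sobolev embedding, and one must observe that the Hilbert tensor product of two norm-compact operators is compact (each factor is a norm-limit of finite-rank maps, and tensor products of finite-rank maps remain finite rank).
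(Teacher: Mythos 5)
Your proposal is correct, and its overall architecture (smoothing operators give density in $\K$, negative order gives compactness, nuclearity of $\K$ handles the choice of tensor norm) matches the paper's; the differences are in how the two nontrivial steps are discharged. For the inclusion $\overline{\Psi_{cl}^{-1,-1}} \subseteq \K$ the paper simply cites the classical fact $\Psi_{cl}^{-1}(X_i) \subset \K_i$ (Shubin) and asserts that the bisingular case is treated ``in exactly the same way,'' whereas you derive it self-containedly from the order-$(-1,-1)$ mapping property into $H^{1,1}$, the identification $H^{1,1}(X_1\times X_2)\cong H^1(X_1)\potimes H^1(X_2)$, Rellich compactness in each factor, and the observation that a Hilbert-space tensor product of compact operators is compact; this exploits the tensor structure of the bisingular Sobolev scale and is arguably cleaner than an ``analogy'' argument, at the cost of invoking the closed-manifold analogue of Proposition \ref{Prop:cont}, which the paper does assert. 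For the reverse inclusion the paper routes the density through Hilbert--Schmidt operators (smooth kernels dense in $L^2$ kernels, $\mathrm{HS}$ dense in $\K$), while you use finite-rank operators with smooth kernels directly; these are interchangeable. For part \emph{ii)} the paper declares the statement folklore, alternatively citing the groupoid $C^{\ast}$-algebra tensor property for pair groupoids, whereas you sketch the elementary spatial isomorphism $\K(H_1\potimes H_2)\cong \K(H_1)\otimes\K(H_2)$ with surjectivity from rank-one operators on simple tensors and uniqueness of the norm from nuclearity; both are legitimate, and yours has the advantage of not outsourcing the key point.
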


\begin{proof}
\emph{i)} It is a standard argument which demonstrates that the relative $\L(L^2(X_i))$-completions of $\Psi_{cl}^{-1}(X_i)$
yield the compact operators $\K_i$ for $i = 1,2$. 
To see this note that $\Psi_{cl}^{-1}(X_i)$ contains the smoothing operators $\Psi^{-\infty}$ which have a Schwartz
kernel which is rapidly decreasing, hence contained in the Schwartz class $\mathcal{S}(X_i \times X_i)$. 
The rapidly decreasing functions are dense in $L^2(X_i \times X_i)$.
Since operators with $L^2$-kernel are Hilbert-Schmidt (HS) we obtain the inclusions
{ \small{
\[
\xymatrix{
\mathrm{HS}_i \ar@{^{(}->}[r] & \K_i \\
\Psi^{-\infty}(X_i) \ar@{^{(}->}[u] \ar@{^{(}->}[r] & \Psi_{cl}^{-1}(X_i) \ar@{^{(}->}[u] 
}
\]
}}

For the inclusion $\Psi_{cl}^{-1}(X_i) \subset \K_i$ we refer to e.g. \cite{shubin}.
The inclusion $\mathrm{HS}_i \subset \K_i$ is dense, hence the inclusion $\Psi_{cl}^{-1}(X_i) \subset \K_i$ is dense with regard to the
$\L(L^2)$-norm.

The bisingular case is treated in exacly the same way, proving that $\Psi^{-1,-1}(X_1 \times X_2) \subset \K$ is dense. 

\emph{ii)} This a folklore result which can be established by an elementary but tedious argument.
Alternatively, it can be viewed as a special case of a tensor product property of groupoid $C^{\ast}$-algebras
(in this case applied to the pair groupoids $X_i \times X_i, \ i = 1,2$), see \cite{ln}. 
Note that this holds for any $C^{\ast}$-tensor product by the nuclearity of the algebra of compact operators. 
\end{proof}

We make use of a technique which was introduced by H. O. Cordes \cite{c} considering so-called comparison algebras of
pseudodifferential operators.

First recall the definition of comparison algebras for the standard H\"ormander calculus. 

Denote by $X_i$ smooth compact manifolds for $i = 1,2$ and let $\D(X_i)$ denote the algebra of differential operators (filtered
by degree) on $X_i$ for $i = 1,2$. 

Fix the Laplace operators $\Delta_i := \Delta_{g_i}$ for a fixed (smooth) Riemannian metric $g_i$ on $X_i$ and set
$\Lambda^{(i)} := (I + \Delta_i)^{-\frac{1}{2}}$ for $i = 1,2$. 

Define the \emph{comparison algebras} for $i =1,2$ as follows
\begin{align*}
\U(X_i) := \ideal{\{L \Lambda^{(i)} : L \in \D(X_i), \ \mathrm{deg}(L) \leq 1\}, \ \K_i}_{C^{\ast}}. 
\end{align*}

We have the following well-known result.
\begin{Thm}[H. O. Cordes]
Let $X_1, X_2$ be two smooth compact manifolds. The $C^{\ast}$-completion of the $0$-order classical pseudodifferential operators yield
\[
\overline{\Psi_{cl}^{0}(X_i)} \cong \U(X_i), \ i = 1,2.
\]
\label{Thm:Cordes}
\end{Thm}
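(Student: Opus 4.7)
The plan is to compare $\U(X_i)$ and $\overline{\Psi_{cl}^{0}(X_i)}$ through their respective principal symbol exact sequences and to reduce the equality to a Stone--Weierstrass density argument on the cosphere bundle $S^{\ast} X_i$. The inclusion $\U(X_i) \subseteq \overline{\Psi_{cl}^{0}(X_i)}$ is direct: each generator $L\Lambda^{(i)}$ with $\deg(L) \leq 1$ is the product of a differential operator of order at most one with an element of $\Psi_{cl}^{-1}(X_i)$, hence lies in $\Psi_{cl}^{0}(X_i)$; and $\K_i \subseteq \overline{\Psi_{cl}^{-1}(X_i)} \subseteq \overline{\Psi_{cl}^{0}(X_i)}$ by (the proof of) Lemma~\ref{Lem:tensor}. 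Since $\overline{\Psi_{cl}^{0}(X_i)}$ is a $C^{\ast}$-algebra, it contains the generated $C^{\ast}$-subalgebra $\U(X_i)$.

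For the reverse inclusion I would invoke the classical principal symbol exact sequence
\[
0 \longrightarrow \K_i \longrightarrow \overline{\Psi_{cl}^{0}(X_i)} \xrightarrow{\sigma_0} C(S^{\ast} X_i) \longrightarrow 0
\]
of Seeley and H\"ormander. It then suffices to show $\sigma_0(\U(X_i)) = C(S^{\ast} X_i)$, since given $a \in \overline{\Psi_{cl}^{0}(X_i)}$ one may pick $b \in \U(X_i)$ with $\sigma_0(a) = \sigma_0(b)$ and conclude that $a - b \in \K_i \subseteq \U(X_i)$. The surjectivity follows by Stone--Weierstrass: computing symbols on generators gives $\sigma_0(L\Lambda^{(i)})(x,\xi) = i\langle a(x), \xi\rangle / |\xi|_{g_i}$ for $L = \sum a_j(x)\partial_j + b(x)$, so the image is rich in functions linear in $\xi / |\xi|_{g_i}$. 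Since $\Lambda^{(i)}$ commutes with $\Delta_i$, the identity $\Lambda^{(i)} \Delta_i \Lambda^{(i)} = I - (\Lambda^{(i)})^2$ together with a decomposition $\Delta_i = \sum_j L_j^{\ast} L_j$ into first-order pieces via a partition of unity places $I$ into $\U(X_i)$ modulo compacts, so $\sigma_0(\U(X_i))$ is a unital $\ast$-subalgebra of $C(S^{\ast} X_i)$. Distinct points of $S^{\ast} X_i$ are separated by symbols $\langle v, \xi\rangle / |\xi|_{g_i}$ for bump-supported vector fields $v$, and Stone--Weierstrass yields density of $\sigma_0(\U(X_i))$ in $C(S^{\ast} X_i)$; closedness of $\U(X_i)$ and continuity of $\sigma_0$ then give equality.

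The main obstacle I expect is the construction of a globally valid decomposition $\Delta_i = \sum_j L_j^{\ast} L_j$ with first-order $L_j$ yielding the identity in $\U(X_i)$ modulo compacts; this requires a partition-of-unity argument in which the lower-order remainders produced by the product rule land in $\K_i$. Once this is in place, the verification of the Stone--Weierstrass hypotheses and the short diagram chase against the Seeley--H\"ormander symbol exact sequence proceed routinely.
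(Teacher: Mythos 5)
Your argument is correct, and it is the standard proof of Cordes's theorem; note, however, that the paper does not prove Theorem \ref{Thm:Cordes} at all --- it is quoted as a known result with reference to \cite{c}, and the only proof in this spirit that the paper supplies is for the bisingular analogue, Theorem \ref{Thm:comparison}. There the author follows exactly your strategy in compressed form: the easy inclusion of the generators into $\overline{\Psi_{cl}^{0,0}}$, then a comparison of the two algebras through the (completed) principal symbol map with common range, reducing the difference to the ideal of compacts. So your route is the same reduction-via-the-symbol-sequence argument, just carried out in the classical one-factor setting with the Stone--Weierstrass step made explicit. Two small points deserve tightening. First, ``closedness of $\U(X_i)$ and continuity of $\sigma_0$'' is not quite the right justification that $\sigma_0(\U(X_i))$ is closed; continuous images of closed sets need not be closed. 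The correct statement is that $\sigma_0$ restricted to $\U(X_i)$ is a $\ast$-homomorphism of $C^{\ast}$-algebras, so its image is automatically a $C^{\ast}$-subalgebra of $C(S^{\ast}X_i)$, hence closed, and density then gives surjectivity. Second, the step you flag as the main obstacle is standard and not delicate: choose a finite atlas with a partition of unity $\sum_k \varphi_k^2 = 1$ and, on each chart, a smooth square root $b = (b^i_l)$ of the positive matrix $(g^{ij})$; the operators $L_{k,l} = \varphi_k \sum_i b^i_l \partial_i$ are first order and $\sum_{k,l} L_{k,l}^{\ast} L_{k,l}$ has the same principal symbol $|\xi|_{g_i}^2$ as $\Delta_i$, so the discrepancy has order at most one and, after conjugation by $\Lambda^{(i)}$, lands in $\overline{\Psi_{cl}^{-1}(X_i)} = \K_i \subset \U(X_i)$. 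With these two remarks your identity $\Lambda^{(i)}\Delta_i\Lambda^{(i)} = I - (\Lambda^{(i)})^2$ indeed places $I$ in $\U(X_i)$, the symbol image is a unital, conjugation-closed, point-separating subalgebra of $C(S^{\ast}X_i)$, and the diagram chase against the exact sequence $\K_i \rightarrowtail \overline{\Psi_{cl}^{0}(X_i)} \twoheadrightarrow C(S^{\ast}X_i)$ (the same sequence the paper invokes right after the theorem) completes the proof.
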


Our goal is to define the corresponding comparison algebra for the bisingular operators of order $(0,0)$.

\begin{Def}
On $X_1 \times X_2$ define the following \emph{comparison algebra}
\[
\U(X_1 \times X_2) := \ideal{\mathrm{span}\{L_1 \Lambda^{(1)} \otimes L_2 \Lambda^{(2)} : L_j \in \D(X_j), \ \mathrm{deg}(L_j) \leq 1, \ j = 1,2\}, \ \K}_{C^{\ast}}
\]
\label{Def:comparison}
\end{Def}

This leads to a corresponding result for bisingular operators. 

\begin{Thm}
We have an isomorphism of $C^{\ast}$-algebras
\[
\U(X_1 \times X_2) \cong \overline{\Psi_{cl}^{0,0}(X_1 \times X_2)}^{\L(L^2)}. 
\]
\label{Thm:comparison}
\end{Thm}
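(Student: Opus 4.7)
The plan is to adapt the strategy behind Cordes's theorem (Theorem~\ref{Thm:Cordes}) to the bisingular setting, using the tensor product structure on both the operator and symbol sides; the two inclusions will be handled separately.

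For $\U(X_1 \times X_2) \subseteq \overline{\Psi_{cl}^{0,0}(X_1 \times X_2)}^{\L(L^2)}$, each factor $L_j \Lambda^{(j)}$ belongs to $\Psi_{cl}^0(X_j)$ by Cordes's result, and the external tensor product of classical pseudodifferential operators on the individual factors lies in the classical bisingular calculus $\Psi_{cl}^{0,0}(X_1 \times X_2)$ by construction. Hence each generator of $\U(X_1 \times X_2)$ is already in $\Psi_{cl}^{0,0}$; the compacts $\K$ are contained in $\overline{\Psi_{cl}^{0,0}}$ by Lemma~\ref{Lem:tensor}(i), so the $C^{\ast}$-algebra they jointly generate sits inside the closure.

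For the reverse inclusion I would pass through the principal symbol short exact sequence
\[
0 \to \K \to \overline{\Psi_{cl}^{0,0}(X_1 \times X_2)}^{\L(L^2)} \to \A \to 0,
\]
where $\A$ denotes the $C^{\ast}$-closure of the symbol space $\Sigma^{0,0}$ from Definition~\ref{Def:symbspace}. Since $\U(X_1 \times X_2)$ contains $\K$ by definition and lies in the closure by the first step, it suffices to verify that the image of $\U(X_1 \times X_2)$ in $\A$ is dense. On the symbol side, the generator $L_1 \Lambda^{(1)} \otimes L_2 \Lambda^{(2)}$ maps under $\sigma_1^0$ to
\[
(x_1, \xi_1) \mapsto \frac{\sigma(L_1)(x_1, \xi_1)}{|\xi_1|_{g_1}} \cdot L_2 \Lambda^{(2)} \in \Psi_{cl}^0(X_2),
\]
and symmetrically for $\sigma_2^0$. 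Applying Cordes's theorem on each factor, closures of the operator-valued components fill out $\U(X_j) \cong \overline{\Psi_{cl}^0(X_j)}$, while the scalar multipliers $\sigma(L_j)/|\xi_j|_{g_j}$ together with the constants generate $C(S^{\ast}X_j)$ as a $C^{\ast}$-algebra. Nuclearity (as exploited in Lemma~\ref{Lem:tensor}(ii)) then yields density of the resulting set in the tensor product $C(S^{\ast}X_j) \potimes \overline{\Psi_{cl}^0(X_{3-j})}$, and the compatibility relation \eqref{comp} between the two principal symbols persists in the limit.

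The main obstacle I anticipate is identifying $\A$ explicitly as the appropriate $C^{\ast}$-subalgebra of $C(S^{\ast}X_1, \overline{\Psi_{cl}^0(X_2)}) \oplus C(S^{\ast}X_2, \overline{\Psi_{cl}^0(X_1)})$ cut out by \eqref{comp}, and showing that the tensor-product generators are dense therein; this is essentially a Stone--Weierstrass argument with operator coefficients, and one must check that approximations on either factor respect the compatibility condition. Once this description of $\A$ is in place, the inclusion $\overline{\Psi_{cl}^{0,0}} \subseteq \U(X_1 \times X_2)$ follows from a diagram chase between the two short exact sequences with kernel $\K$, completing the proof.
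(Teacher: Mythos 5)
Your proposal follows essentially the same route as the paper: the first inclusion via the generators lying in $\Psi_{cl}^{0,0}$ together with $\K$, and the reverse inclusion by showing both algebras have the same image under the combined principal symbol map $\sigma_1 \oplus \sigma_2$ and then reducing to the order $(-1,-1)$ level, which is Lemma~\ref{Lem:tensor}~\emph{ii)}. The one step you flag as the main obstacle --- identifying the completed symbol algebra $\overline{\Sigma^{0,0}}$ and the density of the tensor-product generators therein --- is exactly what the paper disposes of by citing Lemma~3.3 of \cite{bohlen}, so your Stone--Weierstrass sketch is just an attempt to reprove that cited result rather than a genuinely different argument.
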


\begin{proof}
We have the inclusion
\[
\mathrm{span}\{L_1 \Lambda^{(1)} \otimes L_2 \Lambda^{(2)} : \ L_j \in \D(X_j), \ \mathrm{deg}(L_j) \leq 1\} \subset \Psi_{cl}^{0,0}(X_1 \times X_2)
\]

by definition of the bisingular class. 

Additionally, $\K \subset \Psi^{0,0}(X_1 \times X_2)$ and hence the first direction is clear. 

For the other inclusion consider the combined principal symbol $\sigma_1 \oplus \sigma_2 \colon \Psi_{cl}^{0,0}(X_1 \times X_2) \to \Sigma^{0,0}$
and the induced action on $\overline{\Psi_{cl}^{0,0}}$ and $\U(X_1 \times X_2)$ respectively. 
The actions have the same range $\overline{\Sigma^{0,0}} \cong \Sigma$, the completed symbol algebra (see \cite{bohlen}, Lemma 3.3). 
Hence the assertion reduces to the order $(-1,-1)$-case and this follows from Lemma \ref{Lem:tensor} \emph{ii)}. 
\end{proof}

From the standard exact sequences
\[
\xymatrix{
\K_i \ar@{>->}[r] & \overline{\Psi_{cl}^{0}(X_i)} \ar@{->>}[r]^{\sigma_{X_i}} & C(S^{\ast} X_i)
}
\]

we conclude that the completed algebras $\overline{\Psi_{cl}^{0}(X_i)}$ are nuclear each (since the property of nuclearity is
closed under extensions). 

Therefore it does not matter which $C^{\ast}$-tensor product we are using.
Additionally, it is clear by definition that
\[
\U(X_1) \otimes \U(X_2) \cong \U(X_1 \times X_2)
\]

for any $C^{\ast}$-tensor product $\otimes$. 

\subsection{$K$-theory}

Consider now the classical global bisingular operators $G_{cl}^{0,0}(\Rr^{n_1 + n_2})$. 
The Sobolev continuity in particular yields a continuous inclusion $G_{cl}^{0,0} \hookrightarrow \L(L^2(\Rr^{n_1} \times \Rr^{n_2}), L^2(\Rr^{n_1} \times \Rr^{n_2}))$. 
And similarly for the order $\leq 0$ cases. We take the corresponding $\L(L^2)$-completions and obtain $C^{\ast}$-algebras. 

Introduce the respective $\L(L^2)$ completions
\begin{align*}
A_1 &:= \overline{G_{cl}^0(\Rr^{n_1})}, \ A_2 := \overline{G_{cl}^0(\Rr^{n_2})}, \ \K_1 := \overline{G_{cl}^{-1}(\Rr^{n_1})}, \ \K_2 := \overline{G_{cl}^{-1}(\Rr^{n_2})}, \
\A := \overline{G_{cl}^{0,0}}.
\end{align*}

The results we obtained above carry over to this case with the same arguments.

In particular we have the isomorphisms
\[
\A \cong A_1 \otimes A_2, \ \K \cong \K_1 \otimes \K_2. 
\]

We obtain the following $K$-theory\footnote{The calculations for the symbol algebra $\Sigma$ which are not needed in this paper are lengthier, cf. \cite{bohlen}.}.

\begin{Thm}
We have the following $K$-theory
\[
K_0(\A) \cong \Zz, \ K_1(\A) \cong 0. 
\]
\label{Thm:Kthy}
\end{Thm}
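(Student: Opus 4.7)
The plan is to reduce to single-factor $K$-theory via the tensor-product decomposition $\A \cong A_1 \otimes A_2$ established just above, and then to invoke K\"unneth.

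First, for each $i = 1,2$ I would establish the Shubin analogue of the Cordes-type short exact sequence,
$$0 \longrightarrow \K_i \longrightarrow A_i \xrightarrow{\sigma_i} C(S^{2n_i - 1}) \longrightarrow 0,$$
where $\sigma_i$ is the continuous extension of the Shubin principal symbol. Here $S^{2n_i - 1}$ arises as the boundary sphere of the radial compactification of $\Rr^{2n_i}$; surjectivity and the identification of the kernel follow by the same density argument as in Lemma \ref{Lem:tensor} \emph{i)} together with the standard fact that for classical Shubin symbols the $L^2$-operator norm dominates the supremum norm of the principal symbol.

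Next I would plug this extension into the six-term exact sequence in $K$-theory. Using $K_0(\K_i) = \Zz$, $K_1(\K_i) = 0$, and $K_0(C(S^{2n_i - 1})) = K_1(C(S^{2n_i - 1})) = \Zz$, the sequence collapses to
$$\Zz \longrightarrow K_0(A_i) \longrightarrow \Zz \longrightarrow 0 \longrightarrow K_1(A_i) \longrightarrow \Zz \xrightarrow{\partial} \Zz,$$
and everything hinges on the index connecting map $\partial$. This map sends the $K_1$-class of an elliptic symbol on $S^{2n_i - 1}$ to the Fredholm index of a lift in $A_i$, and by the Fedosov/Shubin index formula for the global Shubin calculus it is an isomorphism. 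It follows that $K_0(A_i) \cong \Zz$ and $K_1(A_i) = 0$.

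Finally, both $A_1$ and $A_2$ are nuclear (being extensions of commutative $C^{*}$-algebras by $\K_i$) and so lie in Schochet's bootstrap class; since the $K$-groups involved are torsion-free, the Tor term in the K\"unneth formula vanishes, and the decomposition $\A \cong A_1 \otimes A_2$ (valid for any $C^*$-tensor product by nuclearity) yields
\begin{align*}
K_0(\A) &\cong K_0(A_1) \otimes K_0(A_2) \,\oplus\, K_1(A_1) \otimes K_1(A_2) \cong \Zz, \\
K_1(\A) &\cong K_0(A_1) \otimes K_1(A_2) \,\oplus\, K_1(A_1) \otimes K_0(A_2) \cong 0.
\end{align*}
The main technical obstacle is the identification of $\partial$ as an isomorphism — equivalently, the surjectivity (and non-vanishing) of the Fedosov/Shubin index map, guaranteeing that every integer is realised as the Fredholm index of an elliptic order-zero global Shubin operator on $\Rr^{n_i}$. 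Everything else is a formal application of the six-term and K\"unneth sequences.
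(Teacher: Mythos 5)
Your proposal is correct and takes essentially the same route as the paper: the extended principal symbol sequence $0 \to \K_i \to A_i \to C(S^{2n_i-1}) \to 0$, the six-term exact sequence for each factor, and the K\"unneth theorem applied to $\A \cong A_1 \otimes A_2$ using torsion-freeness and nuclearity. The only difference is in how the crucial surjectivity of the index map is justified: you invoke the Fedosov/Shubin index formula to realize every integer as an index, while the paper deduces it from the existence of an index-one element (a non-unitary isometry), which forces the inclusion $\K_i \hookrightarrow A_i$ to induce the zero map on $K_0$ — the two arguments are equivalent in substance.
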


\begin{proof}
Consider the short exact sequences induced by the extended principal symbol maps
\begin{align*}
\xymatrix{
0 \ar[r]^{j} & \K_1 \ar[r] & A_1 \ar[r]^-{\sigma_{\Rr^{n_1}}} & C(S^{2n_1 - 1}) \ar[r] & 0, \\
0 \ar[r]^{j} & \K_2 \ar[r] & A_2 \ar[r]^-{\sigma_{\Rr^{n_2}}} & C(S^{2n_2 - 1}) \ar[r] & 0.
}
\end{align*}

Apply the six-term exact sequence in K-theory to obtain
\[
\xymatrix{
\Zz \ar[r] & K_0(A_i) \ar[r] & K_0(A_i / \K_i) \ar[d] \\
K_1(A_i / \K_i) \ar[u]^{\delta} & \ar[l] K_1(A_i) & \ar[l] 0.
}
\]

Since $A_i$ contains the compacts and an element of index one (see e.g. \cite{hoerm}, Thm. 19.3.1) there is a non-unitary isometry which acts with regard to some fixed
orthonormal basis.
Therefore every finite rank projection in the compacts is stably homotopic to $0$ and the map in $K$-theory induced by the inclusion $j$ is the zero map.
Hence the index mapping $\delta$ is surjective. 
Since the $K$-theory of the odd spheres is $K_0(C(S^{2n_i - 1})) \cong K^0(S^{2n_i - 1}) \cong \Zz$ as well as
$K_1(C(S^{2n_i - 1})) \cong K^1(S^{2n_i - 1}) \cong \Zz$ we obtain
\[
K_0(A_i) \cong K_0(A_i / \K_i) \cong \Zz, \ K_1(A_i) \oplus \Zz \cong K_1(A_i / \K_i) \cong \Zz 
\]

hence 
\[
K_0(A_i) \cong \Zz, \ K_1(A_i) \cong 0. 
\]

Since the $K$-theory groups are in particular torsion free we can apply K\"unneth's theorem in $K$-theory to obtain
\[
K_0(\A) \cong K_0(A_1 \otimes A_2) \cong K_0(A_1) \otimes K_0(A_2) \oplus K_1(A_1) \otimes K_1(A_2) \cong \Zz \otimes \Zz \cong \Zz
\]

and
\[
K_1(\A) \cong K_1(A_1 \otimes A_2) \cong K_0(A_1) \otimes K_1(A_2) \oplus K_1(A_1) \otimes K_0(A_2) \cong 0
\]

as claimed.
\end{proof}

\section{Complex powers}

In this section we recall the necessary terminology and assumptions for the study of complex powers of bisingular operators. 
Let $\Lambda \subset \Cc$ denote a sector in the complex plane. 

\begin{Def}
Let $a \in S_{cl}^{m_1, m_2}(X_1 \times X_2)$ be a symbol. We say that $a$ is \emph{$\Lambda$-elliptic} if there is a constant $R > 0$
such that the following conditions hold.

\emph{i)} For each $|v_1| > R$ and $\lambda \in \Lambda$ we have 
\[
\sigma_1^{m_1}(A)(v_1) - \lambda I_{M_2} \in \inv \Psi_{cl}^{m_2}(X_2).
\]

\emph{ii)} For each $|v_1| >R$ and $\lambda \in \Lambda$ we have 
\[
\sigma_2^{m_2}(A)(v_2) - \lambda I_{M_1} \in \inv \Psi_{cl}^{m_1}(X_1).
\]

\emph{iii)} For all $|v_i| > R$ with $i = 1,2$ and all $\lambda \in \Lambda$ we have 
\[
(\sigma^{m_1, m_2}(A)(v_1, v_2) - \lambda)^{-1} \in S^{-m_1, -m_2}(X_1 \times X_2).
\]

\label{Def:lambda}
\end{Def}

In order to introduce complex powers we need to make an additional assumption (cf. e.g. \cite{bgpr}). 

\begin{Ass}
\emph{i)} $A \in \Psi_{cl}^{m_1, m_2}(X_1 \times X_2)$ is $\Lambda$-elliptic.

\emph{ii)} $\sigma(A) \cap \Lambda = \emptyset$ (i.e. $A$ is in particular invertible). 

\label{Ass:A}
\end{Ass}

Then it is possible to define complex powers for bisingular operators.
\begin{Def}
Let $A$ be a bisingular operator fulfilling the assumption \ref{Ass:A}. Then set
\[
A_z := \frac{i}{2\pi} \int_{\partial \Lambda_{\epsilon}^{+}} \lambda^z (A - \lambda I)^{-1}\,d\lambda, \Re(z) < 0
\]

with $\Lambda_{\epsilon} := \Lambda \cup \{z \in \Cc : |z| \leq \epsilon\}$. 

We set 
\[
A^z := A_{z -k} \circ A^k, \ \Re(z- k) < 0. 
\]
\label{Def:cplex}
\end{Def}

\begin{Rem}
\emph{i)} The integral for $A_{z}$ exists by the standard estimate $\|(A - \lambda I)^{-1}\| = O(|\lambda|^{-1})$, cf. \cite{battisti}, Thm. 2.1.

\emph{ii)} Given $A \in \Psi_{cl}^{m_1, m_2}(X_1 \times X_2)$ which admits complex powers we obtain an operator $A^z \in \Psi_{cl}^{m_1 z, m_2 z}(X_1 \times X_2)$, 
see \cite{battisti}, Thm. 2.2. 

\emph{iii)} We recall the following estimates from \cite{battisti}, Lem. 2.1, see also \cite{mss}. 
Let $a$ be a given symbol which is assumed to be $\Lambda$-elliptic. For all $K_i \subseteq \Omega_i$ compact there is a constant $c_0 > 1$ such that for
\[
\Omega_{\xi_1, \xi_2} := \{z \in \Cc \setminus \Lambda : \frac{1}{c_0} \ideal{\xi_1}^{m_1} \ideal{\xi_2}^{m_2} < |z| < c_0 \ideal{\xi_1}^{m_1} \ideal{\xi_2}^{m_2}\}. 
\]

we obtain that 
\[
\mathrm{spec}(a(x_1, \xi_1, x_2, \xi_2)) = \{\lambda \in \Cc : a(x_1, \xi_1, x_2, \xi_2) - \lambda = 0\} \subseteq \Omega_{\xi_1, \xi_2}, \forall \ x_i \in \Omega_i, \ \xi_i \in \Rr^{n_i}. 
\]

We have the estimates
\[
|(\lambda - a_{m_1, m_2}(x_1, \xi_1, x_2, \xi_2))^{-1}| \leq C (|\lambda| + \ideal{\xi_1}^{m_1} \ideal{\xi_2}^{m_2})^{-1}
\]

as well as
\begin{align*}
& |(a_{m_1, \cdot} - \lambda I_1)^{-1}|  \leq C (|\lambda| + \ideal{\xi_1}^{m_1} \ideal{\xi_2}^{m_2})^{-1}, \\
& |(a_{\cdot, m_2} - \lambda I_2)^{-1}| \leq C(|\lambda| + \ideal{\xi_1}^{m_1} \ideal{\xi_2}^{m_2})^{-1}
\end{align*}

for each $x_i \in K_i, \ \xi_i \in \Rr^{n_i}, \ \lambda \in \Cc \setminus \Omega_{\xi_1, \xi_2}, \ i = 1,2$. 
\label{Rem:Batt}
\end{Rem}


We are going to show that complex powers of the classical bisingular operators are again classical bisingular operators.
The proof given here relies on the radial compactification and this method of proof
works also in the SG-calculus (see \cite{bc}, Thm. 1.9).

\begin{Thm}
Let $A \in \Psi_{cl}^{m_1, m_2}(X_1 \times X_2)$ such that $A$ fufills assumption \ref{Ass:A} and $z \in \Cc$ such that $\Re(z) < 0$.
Then the complex power operator $A^z$ is contained in $\Psi_{cl}^{m_1 z, m_2 z}(X_1 \times X_2)$. 
\label{Thm:complex}
\end{Thm}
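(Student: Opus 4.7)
The plan is to write $A^z$ via the Cauchy integral of Definition~\ref{Def:cplex}, construct a parameter-dependent parametrix $B(\lambda)$ for $A - \lambda I$ within the classical bisingular calculus, and then identify the result as a classical bisingular operator by verifying the radial-compactification characterization of Definition~\ref{Def:classical}. This mirrors the approach carried out for the SG-calculus in \cite{bc}.

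First I construct the parametrix. Using $\Lambda$-ellipticity and the three invertibility conditions of Definition~\ref{Def:lambda}, build inductively a parameter-dependent symbol $b(x,\xi;\lambda) \sim \sum_{j,k \geq 0} b_{-j,-k}(x,\xi;\lambda)$ whose leading term is $(a_{m_1,m_2} - \lambda)^{-1}$ and whose two leading operator-valued parts invert $\sigma_1^{m_1}(A) - \lambda$ and $\sigma_2^{m_2}(A) - \lambda$ modulo lower order. The compatibility condition~\eqref{comp} ensures that these three levels fit together consistently, and the uniform estimates of Remark~\ref{Rem:Batt}(iii) propagate through the iteration, yielding parameter-dependent bounds of the form
\[
|D_{\xi_1}^{\alpha_1}D_{x_1}^{\beta_1}D_{\xi_2}^{\alpha_2}D_{x_2}^{\beta_2} b_{-j,-k}| \leq C\,(|\lambda| + \langle\xi_1\rangle^{m_1}\langle\xi_2\rangle^{m_2})^{-1}\, \langle\xi_1\rangle^{m_1 - j - |\alpha_1| - |\beta_1|}\langle\xi_2\rangle^{m_2 - k - |\alpha_2| - |\beta_2|}.
\]
A standard asymptotic summation produces a symbol $b$ satisfying these bounds, and $(A - \lambda)\,\op(b) = I + R(\lambda)$ with $R(\lambda)$ smoothing and rapidly decaying in $|\lambda|$. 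Combined with the a priori bound $\|(A-\lambda)^{-1}\|=O(|\lambda|^{-1})$ from Remark~\ref{Rem:Batt}(i), this shows that $(A-\lambda)^{-1}$ equals $\op(b)$ modulo a smoothing family whose contribution to the Cauchy integral is smoothing.

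The remaining task is to show that
\[
a^{(z)}(x,\xi) := \frac{i}{2\pi}\int_{\partial\Lambda_\epsilon^+} \lambda^z\, b(x,\xi;\lambda)\,d\lambda
\]
is a symbol in $S_{cl}^{m_1 z,\, m_2 z}$. Pull $b$ back to $S_+^{\ast}\Omega_1 \times S_+^{\ast}\Omega_2$ and, near a point of the corner, rescale $\lambda = \tilde\rho_1^{-m_1}\tilde\rho_2^{-m_2}\mu$. The uniform estimates above present the rescaled integrand as a smooth function of the boundary defining functions $\tilde\rho_1, \tilde\rho_2$ up to the corner, multiplied by a factor holomorphic in $\mu$ off the deformed contour. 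Carrying out the contour integral leaves a function in $\tilde\rho_1^{-m_1 z}\tilde\rho_2^{-m_2 z}\, C^\infty(S_+^{\ast}\Omega_1 \times S_+^{\ast}\Omega_2)$, which is exactly the space in Definition~\ref{Def:classical}.

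The main obstacle is the simultaneous behaviour at the corner $\{\tilde\rho_1=0\} \cap \{\tilde\rho_2=0\}$: the radial rescalings in $\xi_1$ and in $\xi_2$ must be made coherently, so that the three levels of the parametrix remain classical after inversion. This is precisely what the compatibility identity~\eqref{comp} and the uniform resolvent bounds of Remark~\ref{Rem:Batt}(iii) --- which control all three symbol levels by the same denominator $(|\lambda|+\langle\xi_1\rangle^{m_1}\langle\xi_2\rangle^{m_2})^{-1}$ --- are designed to secure.
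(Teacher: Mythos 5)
Your overall route coincides with the paper's: represent $A^z$ by the Cauchy integral, insert the bisingular parametrix of $A-\lambda$, and verify classicality through the radial--compactification characterization of Definition~\ref{Def:classical} via the substitution $\lambda=\mu\,\tilde\rho_1^{-m_1}\tilde\rho_2^{-m_2}$; your treatment of the scalar piece $(a_{m_1,m_2}-\lambda)^{-1}$ is essentially the paper's computation of $b_{\underline m z}$, using the lower bound from Remark~\ref{Rem:Batt}. However, there is a genuine gap in how you treat the operator-valued part of the parametrix. The bisingular parametrix is not a joint bihomogeneous sum $\sum_{j,k}b_{-j,-k}$ with scalar leading term; it has the three-level structure
\[
\tilde b(\lambda)=\psi_1\bigl(\sigma_1^{m_1}(A)-\lambda I_2\bigr)^{-1}+\psi_2\bigl(\sigma_2^{m_2}(A)-\lambda I_1\bigr)^{-1}+\psi_1\psi_2\bigl(\sigma^{m_1,m_2}(A)-\lambda\bigr)^{-1}+c(\lambda),
\]
where the first two terms are resolvents of the operator families $\sigma_1^{m_1}(A)(x_1,\xi_1)\in\Psi_{cl}^{m_2}(X_2)$ and $\sigma_2^{m_2}(A)(x_2,\xi_2)\in\Psi_{cl}^{m_1}(X_1)$. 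After the contour integration these terms become the complex powers $(\sigma_1^{m_1}(A))^z$ and $(\sigma_2^{m_2}(A))^z$, and what must be shown is that these lie in $\Psi_{cl}^{m_2z}(X_2)$, resp.\ $\Psi_{cl}^{m_1z}(X_1)$, smoothly in the parameter --- i.e.\ that the contribution is classical at the two boundary hypersurfaces $\{\tilde\rho_1=0\}$ and $\{\tilde\rho_2=0\}$ \emph{away from the corner}, where the expansion coefficients are full operator-valued symbols, not bihomogeneous functions. The paper obtains this by invoking the classical (Seeley-type) theory of complex powers of pseudodifferential operators on a closed manifold for each factor; your argument replaces it by the norm resolvent bounds of Remark~\ref{Rem:Batt}(iii) together with the rescaling near the corner, but those bounds only yield non-classical symbol estimates (membership in $S^{m_1\Re z,\,m_2\Re z}$), and the corner rescaling says nothing about smoothness up to the two faces off the corner. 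As written, the classicality claim there is unsupported; you need either to import Seeley's theorem on the factors (as the paper does) or to carry out a full symbolic analysis of the operator-valued resolvents.

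A smaller point: your assertion that one can arrange $(A-\lambda)\op(b)=I+R(\lambda)$ with $R(\lambda)$ smoothing and rapidly decaying in $|\lambda|$ is stronger than what the finite-step bisingular parametrix construction gives; the paper only uses $\lambda c(\lambda)\in S^{-m_1-1,-m_2-1}$ and disposes of the corresponding term in the Cauchy integral by the asymptotic expansion argument of \cite{bc}. This is repairable, but it should be stated and justified rather than assumed.
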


\begin{proof}
We fix the notation $\m = (m_1, m_2), \ \e = (1,1)$ as well as $\R^{\m} \colon C^{\infty}(S_{+}^{\ast} \Omega_1 \times S_{+}^{\ast} \Omega_2) \to C^{\infty}(T^{\ast} \Omega_1 \times T^{\ast} \Omega_2)$
given by $\R^{\m} f := (\tRC_1 \times \tRC_2)^{\ast} \tilde{\rho}_1^{-m_1} \tilde{\rho}_2^{-m_2} f$. 
By the Cauchy integral formula we obtain
\[
a^z = \frac{1}{2 \pi i} \int_{\partial^{+} \Omega_{\xi_1, \xi_2}} \lambda^z \sym((A - \lambda I)^{-1})\,d\lambda.
\]

We want to show that $a^z \in S_{cl}^{\m z}$. 
First consider
\[
b_{\m z} = \frac{1}{2\pi i} \int_{\partial^{+} \Omega_{\xi_1, \xi_2}} \lambda^z (a_{\m} - \lambda)^{-1}\,d\lambda = [a_{\m}(x_1, \xi_1, x_2, \xi_2)]^z.
\]

It is our aim to prove that $[a_{\m}(x_1, \xi_1, x_2, \xi_2)]^z \in S_{cl}^{\m z}$, i.e. by definition
$(\R^{\m z})^{-1} b_{\m z} \in C^{\infty}(S_{+}^{\ast} \Omega_1 \times S_{+}^{\ast} \Omega_2)$. 

To this end we calculate
\begin{align*}
(\R^{\m z})^{-1} b_{\m z}(y_1, \eta_1, y_2, \eta_2) &= \frac{1}{2\pi i} \int_{\partial^{+} \Omega_{\RC_1^{-1}(\xi_1), \RC_2^{-1}(\xi_2)}} \frac{\lambda^z \tilde{\rho}_1^{m_1 z} \tilde{\rho}_2^{m_2 z}}{a_{\m}(\RC_1^{-1}(\xi_1), \RC_2^{-1}(\xi_2)) - \lambda} \,d\lambda \\
&= \frac{1}{2\pi i} \int_{\partial^{+} \Omega_{\RC_1^{-1}(\xi_1), \RC_2^{-1}(\xi_2)}} \frac{\mu^z \tilde{\rho}_1^{-m_1} \tilde{\rho}_2^{-m_2}}{a_{\m}(\RC_1^{-1}(\xi_1), \RC_2^{-1}(\xi_2)) - \mu \tilde{\rho}_1^{-m_1} \tilde{\rho}_2^{-m_2}} \,d\mu \\
&= \frac{1}{2\pi i} \int_{\partial^{+} \Omega_{\RC_1^{-1}(\xi_1), \RC_2^{-1}(\xi_2)}} \frac{\mu^z}{(\R^{\m})^{-1} a_{\m}(\underline{y}, \underline{\eta}) - \mu} \,d\mu
\end{align*}

via the substitution $\lambda = \mu \tilde{\rho}_1^{-m_1} \tilde{\rho}_2^{-m_2}$. 
By the previous remarks we have the estimate (in radial coordinates)
\[
|(\R^{\m})^{-1} a_{\m}(\underline{y}, \underline{\eta}) - \mu| \geq C (1 + |\mu|). 
\]

Hence we obtain that $(\R^{\m z})^{-1} b_{\m z} \in C^{\infty}(S_{+}^{\ast} \Omega_1 \times S_{+}^{\ast} \Omega_2)$. 

Using the parametrix construction in the bisingular calculus we write first
\[
a^z = \frac{1}{2\pi i} \int_{\partial^{+} \Lambda_{\epsilon}} \lambda^z \tilde{b}(\lambda)\,d\lambda
\]

which equals by the Cauchy integral formula
\[
= \frac{1}{2\pi i} \int_{\Omega_{\xi_1, \xi_2}} \lambda^z \tilde{b}(\lambda)\,d\lambda. 
\]

Recall that
\[
\tilde{b}(\lambda) = \psi_1 (\sigma^{m_1}(A) - \lambda I_2)^{-1} + \psi_2 (\sigma_2^{m_2}(A) - \lambda I_1)^{-1} + \psi_1 \psi_2 (\sigma^{m_1, m_2}(A) - \lambda)^{-1} + c(\lambda)
\]

where 
\[
\lambda c(\lambda) \in S^{-m_1 - 1, -m_2 - 1}(X_1 \times X_2), \ \forall \ \lambda \in \Lambda. 
\]

In particular using the asymptotic expansion argument in the parametrix construction (as in the proof of Thm. 1.9 in \cite{bc})
\[
\int_{\Omega_{\xi_1, \xi_2}} \lambda^z c(\lambda) \,d\lambda \in S_{cl}^{\m z - \e}. 
\]

The cases
\[
\int_{\Omega_{\xi_1, \xi_2}} \lambda^z (a_{m_1, \cdot} - \lambda I_2)^{-1} \,d\lambda \in S_{cl}^{m_1 z, \cdot}
\]

and 
\[
\int_{\Omega_{\xi_1, \xi_2}} \lambda^z (a_{\cdot, m_2} - \lambda I_1)^{-1} \,d\lambda \in S_{cl}^{\cdot, m_2 z}
\]

are proven using the theory of complex powers of pseudodifferential operators on closed manifolds.

Finally, the case $\int (a_{\m} - \lambda)^{-1} \lambda^z \,d\lambda = b_{\m z} \in S_{cl}^{\m z}$ was already established. 
We have therefore shown that $a^z \in S_{cl}^{\m z}$. 
\end{proof}

\section{The bisingular canonical trace}


The so-called \emph{canonical trace} for classical pseudodifferential operators was introduced by M. Kontsevich and S. Vishik, see \cite{kv}.
In this section we outline this construction for the bisingular pseudodifferential operators, thereby obtaining
a definition of a canonical trace functional for bisingular operators (i.e. the \emph{bisingular canonical trace} $\TRb$).

Let $E_i \to X_i, \ i = 1,2$ be two smooth, hermitian vector bundles and let $A \in \Psi_{cl}^{\alpha, \beta}(X_1 \times X_2, E_1 \boxtimes E_2)$. 
Denote by $K(x_1, x_2, y_1, y_2)$ the distributional Schwartz kernel of $A$ and by $a \in S_{cl}^{\alpha, \beta}(X_1 \times X_2, E_1 \boxtimes E_2)$ the symbol. 

In parallel to the construction in \cite{kv} we consider the difference of the Schwartz kernel $K$ (restricted to diagonals of local charts) and the Fourier transforms of the first $N_i + 1, \ N_i \gg 1$ for $i = 1,2$ bihomogenous terms
$a_{\alpha, \beta}, a_{\alpha - 1, \beta - 1}, \cdots, a_{\alpha - N_1, \beta - N_2}$. 

We can write
\begin{align*}
& K_{-n_1 - n_2 + \alpha + \beta + j + k}(x_1, x_2, x_1 - y_1, x_2 - y_2) \\
&= (2\pi)^{-(n_1 + n_2)} \int_{\Rr_{\xi_1}^{n_1}} \int_{\Rr_{\xi_2}^{n_2}} \exp(i (x_1 - y_1) \xi_1 + i (x_2 - y_2) \xi_2) a_{\alpha - j, \beta -k}(x_1, \xi_1, x_2, \xi_2) \,d\xi_1 \,d\xi_2. 
\end{align*}

Here $a$ is positive homogenous in $y_1 - x_1 \in \Rr^{n_1}, \ y_2 - x_2 \in \Rr^{n_2}$ of orders $(-n_1 + \alpha -j, -n_2 + \beta + k)$ and
$\alpha, \beta \notin \Zz$. 
A distribution in $\D'(\Rr^{n_1 + n_2} \setminus \{0\})$ has a unique prolongation to $\D'(\Rr^{n_1 + n_2})$, therefore
$K_{-n_1 + \alpha + j, n_2 - \beta + k}(x_1, x_2, x_1 - y_1, x_2 - y_2)$ makes sense on $y_1 \not= x_1, \ y_2 \not= x_2$. 

Now we consider the (bihomogenous) difference 
\begin{align}
& K(x_1, x_2, y_1, y_2) - \sum_{j=0}^{N_1} \sum_{k=0}^{N_2} K_{-n_1 + \alpha -j, -n_2 + \beta -k}(x_1, x_2, y_1 - x_1, y_2 - x_2). \label{diff}
\end{align}

For $N_1, \ N_2 > 0$ sufficiently large \eqref{diff} defines a continuous function on $(U \times U) \times (V \times V)$ for
$U \subset X_1, \ V \subset X_2$ open sets.
We define the density $t_{U \times V}(A)$ as the restriction of the difference $\eqref{diff}$ to the diagonals $\Delta_U \times \Delta_V$.
The integral density $t_{U \times V}(A)$ restricted to the diagonals $\Delta_U \times \Delta_V$ takes values in $\End(E_1 \boxtimes E_2)$. 

\begin{Def}
The \emph{bisingular canonical trace} is defined as the integrated density
\[
\TRb(A) = \int_{X_1 \times X_2} \operatorname{tr} t(A).
\]
\label{Def:TRb}
\end{Def}

\begin{Rem}
The definition is independent of local coordinates in $X_1 \times X_2$ by the same argument as in \cite{kv}. 
If the real parts of the orders of $A$ are less than $(-n_1, -n_2)$ we obtain the trace
\[
\Tr(A) = \TRb(A)_{|L^2(X_1 \times X_2, E_1 \boxtimes E_2)}. 
\]
\label{Rem:TRb}
\end{Rem}

We can now state the analogues of Lemma 3.1, Lemma 3.2 and Theorem 3.1 of \cite{kv} (the proofs being analogous as well).
In the main theorem we use the notation $\Wres^2$ for the \emph{bisingular Wodzicki trace} as introduced in \cite{nr}, p. 195,
see also Def. \ref{Def:Wres} on p. \pageref{Def:Wres} of the next section. 

\begin{Lem}
The difference \eqref{diff} is continuous on $(U \times U) \times (V \times V)$ for $N_1, \ N_2$ sufficiently large. 
Hence the restriction to the diagonals of the density $t_{U \times V}(A)$ makes sense.
\label{Lem:diff}
\end{Lem}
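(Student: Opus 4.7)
The plan is to work in local coordinates and reduce the assertion to the standard Kontsevich--Vishik-type analysis applied separately in the two cotangent factors. Fix charts $U \subset X_1$ and $V \subset X_2$ over which $E_1$ and $E_2$ are trivialized and write the Schwartz kernel as the oscillatory double Fourier integral
\[
K(x_1,x_2,y_1,y_2) = (2\pi)^{-(n_1+n_2)} \iint e^{i(x_1-y_1)\xi_1 + i(x_2-y_2)\xi_2}\, a(x_1,\xi_1,x_2,\xi_2)\,d\xi_1\,d\xi_2,
\]
together with the analogous formula for each bihomogenous component $a_{\alpha-j,\beta-k}$, after inserting a smooth cutoff equal to one outside a neighbourhood of $\{\xi_1=0\} \cup \{\xi_2=0\}$; this modifies each $K_{-n_1+\alpha-j,-n_2+\beta-k}$ only by a smooth term and so does not affect the statement.

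Next I would invoke the bihomogenous asymptotic expansion defining $S_{cl}^{\alpha,\beta}$: for every $N_1, N_2 \in \Nn$ the remainder
\[
r_{N_1,N_2} := a - \sum_{j=0}^{N_1} \sum_{k=0}^{N_2} a_{\alpha-j,\beta-k}
\]
lies in $S^{\alpha-N_1-1,\beta-N_2-1}$, that is, it satisfies
\[
|D_{\xi_1}^{\mu_1} D_{x_1}^{\nu_1} D_{\xi_2}^{\mu_2} D_{x_2}^{\nu_2} r_{N_1,N_2}| \leq C\, \ideal{\xi_1}^{\alpha-N_1-1-|\mu_1|} \ideal{\xi_2}^{\beta-N_2-1-|\mu_2|}
\]
uniformly on compacta in $x_1, x_2$ (Def.~\ref{Def:symbols}, Def.~\ref{Def:classical}). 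I would then apply the usual integration-by-parts trick independently in each covariable: iterating $(1-\Delta_{\xi_i}) e^{i(x_i-y_i)\xi_i} = \ideal{x_i-y_i}^2 e^{i(x_i-y_i)\xi_i}$ a total of $M_i$ times yields a factor $\ideal{x_i-y_i}^{-2M_i}$ at the cost of $2M_i$ derivatives of the symbol in $\xi_i$. Choosing $M_1, M_2$ large enough that $\alpha-N_1-1-2M_1 < -n_1$ and $\beta-N_2-1-2M_2 < -n_2$, the oscillatory integral representing the remainder kernel becomes absolutely convergent and defines a $C^r$ function in $(x_1-y_1, x_2-y_2)$, with $r \to \infty$ as $N_1, N_2 \to \infty$. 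In particular, for $N_1, N_2$ sufficiently large the difference \eqref{diff} is continuous on $(U \times U) \times (V \times V)$, and restriction to the diagonals $\Delta_U \times \Delta_V$ then makes pointwise sense, yielding the density $t_{U \times V}(A)$ with values in $\End(E_1 \boxtimes E_2)$.

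The main obstacle is making precise sense of each bihomogenous kernel $K_{-n_1+\alpha-j,-n_2+\beta-k}$ as a distribution on all of $U \times U \times V \times V$: away from the diagonals they are given by genuinely convergent Fourier integrals, but they exhibit conormal singularities as $y_i \to x_i$, and the subtraction in \eqref{diff} is only meaningful once these extensions are fixed. Here, as in Kontsevich--Vishik, one exploits the assumption $\alpha, \beta \notin \Zz$ to conclude that every positively homogenous distribution in $(x_1-y_1, x_2-y_2)$ of the relevant orders admits a \emph{unique} homogenous extension from $\Rr^{n_1+n_2} \setminus \{0\}$ to $\Rr^{n_1+n_2}$. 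The bisingular situation forces one to apply this extension argument in each tangent factor separately and then to control the combined remainder; the bi-directional bookkeeping of bihomogeneous orders and of the regularity gained from integration by parts is what makes the argument longer than in the classical one-variable case, but no new analytic input is required beyond the one-variable Kontsevich--Vishik lemma applied twice.
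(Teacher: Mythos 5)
There is a genuine gap, and it sits exactly at the point where the bisingular situation differs from the one-variable Kontsevich--Vishik argument. Your key analytic claim is that the rectangular truncation remainder $r_{N_1,N_2} = a - \sum_{j\le N_1}\sum_{k\le N_2} a_{\alpha-j,\beta-k}$ lies in $S^{\alpha-N_1-1,\beta-N_2-1}$. This is false for bisingular classical symbols: the bihomogeneous expansion is an expansion in each covariable separately (Taylor expansion at the corner $\{\tilde\rho_1=\tilde\rho_2=0\}$ in Def.~\ref{Def:classical}), and subtracting the finite double sum only leaves a remainder in $S^{\alpha-N_1-1,\beta}+S^{\alpha,\beta-N_2-1}$, no matter how large $N_1,N_2$ are. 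The simplest test case already shows the problem: for a product symbol $a=c(x_1,\xi_1)\,b(x_2,\xi_2)$ with $c\in S_{cl}^{\alpha}$, $b\in S_{cl}^{\beta}$, writing $K_c=\sum_{j\le N_1}K_{c,j}+R_1$ and $K_b=\sum_{k\le N_2}K_{b,k}+R_2$ with $R_1,R_2$ continuous, the difference \eqref{diff} equals $K_c R_2 + R_1 K_b - R_1 R_2$, which is still singular on the partial diagonals $\{x_1=y_1\}$ and $\{x_2=y_2\}$ whenever $\Re\alpha\ge -n_1$ or $\Re\beta\ge -n_2$. So continuity of \eqref{diff} cannot be obtained by the argument you give; one must in addition control (or subtract) the ``mixed'' terms built from the operator-valued partial homogeneous components $a_{\alpha-j,\cdot}$ and $a_{\cdot,\beta-k}$ (an inclusion--exclusion type regularization in each factor), and this bi-directional bookkeeping is not a cosmetic lengthening of the one-variable proof but the actual content of the lemma. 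Since the paper itself only asserts that the Kontsevich--Vishik proofs ``carry over,'' this is precisely the step a complete proof has to supply, and your proposal elides it.

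A second, smaller error: the integration-by-parts mechanism you invoke does not produce absolute convergence at the diagonal. Expanding $(1-\Delta_{\xi_i})^{M_i}$ always contains the identity term, so the integrand still has the original orders $(\alpha-N_1-1,\beta-N_2-1)$ in $(\xi_1,\xi_2)$; the factor $\ideal{x_i-y_i}^{-2M_i}$ only buys off-diagonal decay (otherwise every order-zero operator would have a continuous kernel on the diagonal). The continuity of the remainder kernel must come from choosing $N_1,N_2$ so large that the relevant symbol orders drop below $-n_1$ and $-n_2$ (and below $-n_i-r$ for $C^r$ regularity); once the remainder-class issue above is repaired, that is the correct and sufficient condition. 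Your treatment of the homogeneous terms themselves (unique homogeneous extension, using $\alpha,\beta\notin\Zz$, applied in each of the two covariables separately, with the cutoff discrepancy being smooth) is the right ingredient and matches the intended Kontsevich--Vishik-style argument.
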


\begin{Lem}
The density $t_{U \times V}(A)$ with values in $\End(E_1 \boxtimes E_2)$ is for large $N_1, \ N_2$ independent of
local coordinates in $X_1 \times X_2$ and of local trivializations of $E_1 \boxtimes E_2$. 
\label{Lem:trace}
\end{Lem}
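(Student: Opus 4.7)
The plan is to adapt the Kontsevich--Vishik argument from \cite{kv} to the bisingular setting. The key analytic tool is the classical fact that a positively homogeneous distribution on $\Rr^{N} \setminus \{0\}$ of non-integer degree admits a unique tempered extension to $\Rr^{N}$; we apply this in the $\xi_1$- and $\xi_2$-variables separately, using the hypothesis $\alpha, \beta \notin \Zz$ built into the bihomogeneous expansion.

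Invariance under local trivializations is the easier half and I would dispatch it first. Under a transition matrix $g = g_1 \boxtimes g_2$ the endomorphism-valued symbol transforms by conjugation $a \mapsto g a g^{-1}$, and this conjugation commutes with Fourier transform and preserves each bihomogeneous component $a_{\alpha - j, \beta - k}$ separately. Since the fibrewise trace on $\End(E_1 \boxtimes E_2)$ is cyclic, the density $\operatorname{tr} t_{U \times V}(A)$ is manifestly independent of the chosen trivialization.

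For independence from local coordinates, by symmetry it suffices to handle a diffeomorphism $\phi_1 \colon U \to U'$ of the first factor together with the induced fibre change on $\xi_1$. Under $\phi_1$ the symbol $a$ transforms via the standard asymptotic formula (applied pointwise in $(x_2, \xi_2)$, which the coordinate change leaves untouched), rearranging the $\xi_1$-expansion while preserving the $\xi_2$-bihomogeneous structure. The resulting bihomogeneous components $\tilde{a}_{\alpha - j, \beta - k}$ differ from the naive pullbacks of the $a_{\alpha - j, \beta - k}$ by finite sums of strictly lower-order bihomogeneous terms. The core observation is then that both the original subtraction in \eqref{diff} and its $\phi_1$-image differ from the manifestly coordinate-invariant Schwartz kernel $K$ by finite sums of Fourier transforms of bihomogeneous symbols of non-integer order; by the uniqueness of the homogeneous extension guaranteed by $\alpha, \beta \notin \Zz$, the diagonal restrictions of these two representations must yield the same $\End(E_1 \boxtimes E_2)$-valued density on $\Delta_U \times \Delta_V$.

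The main obstacle I anticipate is book-keeping: the individual bihomogeneous terms are not themselves coordinate-invariant, and the change of coordinates genuinely mixes them across different values of $j$ (while leaving $k$-indices stable when only $\phi_1$ acts). As in \cite{kv}, this is handled by observing that the discrepancy between the two finite partial sums is itself a finite sum of Fourier transforms of bihomogeneous symbols whose total order is strictly less than $(-n_1, -n_2)$ after diagonal restriction, hence continuous, and can be absorbed into the remainder by taking $N_1, N_2$ large enough. Thus for $N_1, N_2 \gg 1$ the density $t_{U \times V}(A)|_{\Delta_U \times \Delta_V}$ is intrinsic, and the lemma follows.
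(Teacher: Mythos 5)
Your overall strategy coincides with the paper's: the paper gives no independent argument for this lemma, stating only that it is the analogue of Lemma 3.2 of \cite{kv} with an analogous proof, and your proposal is precisely such an adaptation, with the right ingredients (non-integrality of $\alpha,\beta$, unique homogeneous extensions, reduction to a coordinate change in one factor at a time, absorption of the mixing of bihomogeneous components).

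One step, as written, does not close, though. You argue that the discrepancy between the two truncated subtractions is a finite sum of Fourier transforms of bihomogeneous symbols of total order below $(-n_1,-n_2)$, ``hence continuous, and can be absorbed into the remainder by taking $N_1,N_2$ large enough.'' Continuity is not sufficient: a continuous discrepancy restricted to $\Delta_U\times\Delta_V$ need not vanish, and then the two densities would genuinely differ; enlarging $N_1,N_2$ does not help by itself, since independence of the density from the truncation is part of what must be proved. The Kontsevich--Vishik mechanism is that, after the coordinate change, the terms by which the two partial sums differ are either bihomogeneous in $x_i-y_i$ of \emph{positive} non-integer degree or Taylor remainders vanishing to high order in $x_i-y_i$, and both kinds restrict to zero on the diagonals; this is where $\alpha,\beta\notin\Zz$ enters (the degrees stay in the shifted progressions $\alpha-\Nn_0$, $\beta-\Nn_0$, so no logarithmic or delta-type corrections appear, and the negative-degree parts must cancel identically because the total discrepancy is continuous). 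With that substitution your argument is the intended one. Two smaller points: a transition function $g=g_1\boxtimes g_2$ depends on the base point, so conjugation does \emph{not} preserve the individual components $a_{\alpha-j,\beta-k}$ --- the mixing is again into lower-order terms and is handled by the same absorption; and the lemma asserts that the $\End(E_1\boxtimes E_2)$-valued density itself is well defined (i.e. transforms by conjugation and patches to a global section), not merely its fibrewise trace, so you should record this equivariance rather than pass to $\operatorname{tr}$ immediately.
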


\begin{Thm}
The linear functional
\[
\TRb(A) = \int_{X_1 \times X_2} \operatorname{tr} t(A)
\]

for $A \in \Psi_{cl}^{\alpha_0 + \Zz, \beta_0 + \Zz}(X_1 \times X_2, E_1 \boxtimes E_2)$ and $\alpha_0 \in \Cc \setminus \Zz, \ \beta_0 \in \Cc \setminus \Zz$
has the following properties:

\emph{(1)} $\TRb(A)_{|L^2(X_1 \times X_2, E_1 \boxtimes E_2)} = \Tr(A)$ for $\mathrm{Re} \ \operatorname{ord}(A) < (-n_1, -n_2)$. 

\emph{(2)} $\TRb(A)$ is of \emph{trace-type}, i.e. 
\[
\TR([B, C]) = 0 \ \text{for} \ \operatorname{ord} B + \operatorname{ord} C \in \left(\alpha_0 + \Zz, \beta_0 + \Zz\right). 
\]

\emph{(3)} For any doubly holomorphic family $A(z, w)$ of classical bisingular pseudodifferential operators on $X_1 \times X_2$
and $z \in U \subset \Cc, \ w \in V \subset \Cc$ with $\operatorname{ord} A(z, w) = (z, w)$, the function
$\TRb A(z, w)$ is meromorphic with poles at $z = m_1 \in U \cap \Zz, \ w = m_2 \in \Zz \cap V$.
The residues are
\[
\Res_{z= m_1} \Res_{w = m_2} \TRb(A(z, w)) = -\Wres^2 A(z, w). 
\]
\label{Thm:TRb}
\end{Thm}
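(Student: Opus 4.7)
The plan is to follow the Kontsevich--Vishik strategy \cite{kv}, adapted to the tensor-product structure of the bisingular calculus, as the author indicates. The well-definedness of the diagonal density $t_{U \times V}(A)$ and its invariance under changes of chart and local trivialization are already provided by Lemmas \ref{Lem:diff} and \ref{Lem:trace}, so the three assertions are what remain.

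For part (1), when $\operatorname{Re} \alpha < -n_1$ and $\operatorname{Re} \beta < -n_2$ the symbol $a$ is absolutely integrable in $(\xi_1, \xi_2)$, so the Schwartz kernel $K$ is continuous on the diagonal and $\int \operatorname{tr} K|_{\Delta_U \times \Delta_V} = \Tr(A)$. It then suffices to show that each subtracted term $K_{-n_1+\alpha-j,\, -n_2+\beta-k}$ evaluated on the diagonal vanishes. Because $\alpha, \beta \notin \Zz$, these bihomogeneous kernels have orders that are non-integer in each of the two directions, so their unique canonical extensions from $\Rr^{n_1+n_2} \setminus \{0\}$ to $\Rr^{n_1+n_2}$ take the value zero at the origin, by the standard KV homogeneity argument applied factor-wise.

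For part (2), I would work in local coordinates and expand the symbol of $[B, C]$ via the bisingular symbol composition formula, producing a sum of terms of the form $(\partial_{\xi_1}^{\alpha_1}\partial_{\xi_2}^{\alpha_2} b)\,(\partial_{x_1}^{\alpha_1}\partial_{x_2}^{\alpha_2} c)$ minus the reverse. Since the total order lies off the integer lattice in each direction, the canonical extensions involved admit integration by parts in each symbol variable, and each term appears as a derivative of a regularized density whose integral over $X_1 \times X_2$ vanishes. Applying the one-dimensional KV trace argument in each of the two homogeneity directions yields $\TRb([B, C]) = 0$.

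For part (3), I would begin in the region $\operatorname{Re} z < -n_1,\ \operatorname{Re} w < -n_2$ where $\TRb(A(z, w)) = \Tr(A(z, w))$ is jointly holomorphic in $(z, w)$. Meromorphic continuation in each variable then follows by tracking how the canonical extension of each bihomogeneous piece $K_{-n_1+z-j,\, -n_2+w-k}$ depends on $(z, w)$: the extension is meromorphic with simple poles along the hyperplanes $\{z - j \in \Zz\}$ and $\{w - k \in \Zz\}$. At an intersection $(z, w) = (m_1, m_2)$, the iterated residue picks out precisely the bihomogeneous component of total order $(-n_1, -n_2)$ in the symbol expansion at $A(m_1, m_2)$, and a direct comparison with the definition \eqref{Wres} of $\Wres^2$ yields the displayed formula with the sign $-1$. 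The main obstacle I expect is exactly this last comparison: verifying that the iterated residue of $\TRb A(z, w)$ reproduces $\Wres^2$ as defined via the regulator $Q_1^{-z} \otimes Q_2^{-z}$ requires showing that the order in which the two residues are taken is immaterial, and that regularization by complex powers of the auxiliary $Q_i$ yields the same answer as regularization by canonical extension. Both points rest on the tensor-product structure of the bisingular kernel together with the trace property (2) just established, and reduce in each factor to the standard KV derivation of the Wodzicki residue from the canonical trace.
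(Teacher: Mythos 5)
Your proposal is correct and takes essentially the same route as the paper: the paper gives no independent argument for this theorem, stating only that it is the analogue of Lemmas 3.1, 3.2 and Theorem 3.1 of \cite{kv} ``the proofs being analogous as well,'' and your sketch is precisely that Kontsevich--Vishik adaptation carried out bihomogeneously in the two factors. The comparison of the iterated residue with $\Wres^2$ that you flag as the main obstacle is handled in the paper's framework by the symbol formula of Theorem \ref{Thm:Trace2}, which identifies $\Wres^2$ with the integral of the $(-n_1,-n_2)$-bihomogeneous symbol term independently of the regulators $Q_1, Q_2$.
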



\section{The Wodzicki trace}


In the following discussion we define the bisingular Wodzicki residue and recall the trace property.
At first we state a folklore result which is easily adapted to the bisingular calculus.

\begin{Lem}
Let $A \in \Psi^{m_1, m_2}(X_1 \times X_2)$ such that $m_1 < -n_1, m_2 < -n_2$ then $A$ is a trace class operator
over $L^2(X_1 \times X_2)$. 
The trace is given by
\[
\Tr A = \int_{X_1 \times X_2} K_{|\Delta} 
\]

where $\Delta \subset (X_1 \times X_2)^2$ denotes the diagonal and $K$ is the Schwartz kernel of $A$.

\label{Lem:trace}
\end{Lem}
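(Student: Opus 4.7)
The plan is to proceed in two steps. First, show that $A$ is trace class by factoring it as a product of two Hilbert--Schmidt bisingular operators. Second, derive the trace formula by a local Fourier inversion argument combined with a partition of unity.

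For the trace class property, I would exploit the invertible order-shifting operators $\Lambda^{s_1,s_2}$ from Definition \ref{Def:Sobolev} (or rather the obvious manifold analog built from $\Lambda^{s_i} = (I+\Delta_{g_i})^{s_i/2}$ tensored together). Set $\Lambda := \Lambda^{m_1/2,m_2/2}$ and write $A = (A\Lambda^{-1}) \circ \Lambda$. Both factors lie in $\Psi^{m_1/2,m_2/2}(X_1\times X_2)$, and it suffices to check that a bisingular operator $B$ of order $(s_1,s_2)$ with $s_i < -n_i/2$ is Hilbert--Schmidt on $L^2(X_1\times X_2)$. This follows from the tensor product isomorphism $L^2(X_1\times X_2)\cong L^2(X_1)\potimes L^2(X_2)$ and the fact that the Schwartz kernel of $B$ is square-integrable: in local coordinates the symbol $b(x_1,\xi_1,x_2,\xi_2)\in S^{s_1,s_2}$ has enough decay in $\xi_i$ for the standard kernel estimate to place $K_B \in L^2$. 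Since $m_i/2 < -n_i/2$, both factors are HS and the composition $A$ is trace class.

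For the trace formula, since the symbol satisfies $|a(x,\xi)| \lesssim \ideal{\xi_1}^{m_1}\ideal{\xi_2}^{m_2}$ with $m_i<-n_i$, the oscillatory integral defining the Schwartz kernel is absolutely convergent, so $K$ is continuous on $(X_1\times X_2)^2$ and
\[
K(x_1,x_2,x_1,x_2) = (2\pi)^{-(n_1+n_2)}\int_{\Rr^{n_1}}\int_{\Rr^{n_2}} a(x_1,\xi_1,x_2,\xi_2)\,d\xi_1\,d\xi_2
\]
in any coordinate chart. Take a product partition of unity $\{\phi_\alpha \psi_\beta\}$ subordinate to product charts; by trace-class linearity $\Tr A = \sum_{\alpha,\beta}\Tr(\phi_\alpha\psi_\beta A)$. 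Within each chart I would evaluate the local trace by testing against a Fourier orthonormal basis (or, equivalently, invoking the standard identity $\Tr = \int K_{|\Delta}$ for trace class integral operators with continuous kernel), obtaining $\Tr(\phi_\alpha\psi_\beta A) = \int \phi_\alpha(x_1)\psi_\beta(x_2) K(x_1,x_2,x_1,x_2)\,dx_1\,dx_2$. Summing over $\alpha,\beta$ yields the claimed formula.

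The main obstacle is the HS step: one must verify that a bisingular operator of order $(s_1,s_2)$ with $s_i<-n_i/2$ really does have an $L^2$ Schwartz kernel, despite not being a pure tensor product. This is folklore, but the cleanest justification uses the isomorphism $H^{s_1,s_2}\cong H^{s_1}\potimes H^{s_2}$ together with the fact that $\Lambda^{-s_1/2}\otimes\Lambda^{-s_2/2}$ conjugates the question to the $L^2$-boundedness of a zero-order operator composed with a Hilbert--Schmidt factor, reducing to the classical single-variable case on each $X_i$.
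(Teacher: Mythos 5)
The paper gives no proof of this lemma at all: it is introduced with the words ``we state a folklore result which is easily adapted to the bisingular calculus'' and the text immediately moves on to the canonical trace. So your proposal is not competing with an argument in the paper; it supplies the standard one, and it is essentially correct. The factorization $A=(A\Lambda^{-1})\circ\Lambda$ with $\Lambda=(I+\Delta_{g_1})^{m_1/4}\otimes(I+\Delta_{g_2})^{m_2/4}$ stays inside the bisingular calculus, both factors have order $(m_1/2,m_2/2)$, and the Hilbert--Schmidt claim for orders $s_i<-n_i/2$ follows exactly as you indicate: in a product chart, Plancherel in $(\xi_1,\xi_2)$ gives $\int|K_B(x,x-z)|^2\,dz\lesssim\int\langle\xi_1\rangle^{2s_1}\langle\xi_2\rangle^{2s_2}\,d\xi_1\,d\xi_2<\infty$, the off-diagonal pieces are smoothing, and compactness of $X_1\times X_2$ does the rest; a product of two Hilbert--Schmidt operators is trace class. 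The continuity of $K$ for $m_i<-n_i$ and the localization by a product partition of unity are also fine. The one step you should not wave through with the phrase ``standard identity'' is $\Tr A=\int K_{|\Delta}$: the diagonal is a null set, so trace-classness plus an a.e.-defined kernel does not by itself identify the trace with the restriction of the continuous kernel. The cleanest fix is already contained in your own factorization: for Hilbert--Schmidt $S,T$ one has $\Tr(ST)=\int\!\!\int K_S(x,y)K_T(y,x)\,dy\,dx$, and since $x\mapsto K_S(x,\cdot)$ and $x\mapsto K_T(\cdot,x)$ are continuous into $L^2$ for orders $<(-n_1/2,-n_2/2)$, the function $x\mapsto\int K_S(x,y)K_T(y,x)\,dy$ is continuous and therefore coincides with the continuous kernel of $A$ on the diagonal, which gives the stated formula; alternatively cite the pseudodifferential version directly (Shubin's trace formula for operators of order $<-n$, in its obvious bisingular adaptation) rather than the generic continuous-kernel folklore.
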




We use the notation $\Tr = \TRb$ for the canonical trace which extends the trace in the previous Proposition. 

\begin{Def}
Let $A \in \Psi_{cl}^{m_1, m_2}(X_1 \times X_2)$ be an operator which fulfills the assumption \ref{Ass:A}. 
Then the spectral $\zeta$-function is given by 
\[
\zeta(A, z) := \int_{X_1 \times X_2} K_{A^z}(x_1, x_2, x_1, x_2) \,dx_1\,dx_2 
\]

where $\Re(z) m_1 < -n_1, \Re(z) m_2 < -n_2$.
Here $K_{A^z}$ denotes the kernel of the complex power operator $A^z$. 
\label{Def:zeta}
\end{Def}

Additionally, we fix the set of simple poles of the spectral $\zeta$-function as follows
\[
\Poles := \left\{z_j^{(1)} := \frac{j - n_1}{m_1}, \ z_k^{(2)} = \frac{k - n_2}{m_2} : j, k \in \Nn_0\right\}. 
\]

Note that $\zeta$ can have poles of order two, namely this can occur if $\frac{n_1}{m_1} = \frac{n_2}{m_2}$, see also \cite{battisti}, Theorem 2.3. 



For the definition of the Wodzicki residue we have to consider doubly parametrized holomorphic families of operators $(A(z, \tau))_{(z, \tau) \in \Cc^2}$ as
in \cite{nr}. 

\begin{Prop}[\cite{nr}, Thm. 3.2]
The double $\zeta$-function 
\[
(z, \tau) \mapsto \Tr(A(z, \tau) Q_1^{-z} \otimes Q_2^{-\tau})
\]

is holomorphic for $\Re(z) > m_1 + n_1, \ \Re(\tau) > m_2 + n_2$. 
Furthermore, it can be extended to a meromorphic function with at most simple poles at $z = n_1 + m_1 + j, \ \tau = m_2 + n_2 - k$ where $j, \ k \in \Nn_0$. 
\label{Prop:nicrod}
\end{Prop}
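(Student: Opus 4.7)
The plan is to reduce the assertion to Theorem \ref{Thm:TRb}(3) by recognising the double $\zeta$-function as a bisingular canonical trace evaluated on a suitable doubly holomorphic family. Assume for concreteness that $A(z,\tau)$ has order $(m_1,m_2)$ fixed (the case of order depending affinely on $(z,\tau)$ reduces to this by an obvious shift). Set $B(z,\tau):=A(z,\tau)\,Q_1^{-z}\otimes Q_2^{-\tau}$. Since $Q_i$ is a positive elliptic first-order classical operator, Theorem \ref{Thm:complex} gives $Q_i^{-z}\in \cPsi^{-z}(X_i)$ as a holomorphic family of classical operators, and nuclearity-based external tensor products place $Q_1^{-z}\otimes Q_2^{-\tau}\in \Psi_{cl}^{-z,-\tau}(X_1\times X_2)$. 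Composing with $A(z,\tau)$ produces a doubly holomorphic family $B(z,\tau)\in \Psi_{cl}^{m_1-z,\,m_2-\tau}(X_1\times X_2)$.

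First I would establish holomorphy on the stated half-plane. When $\Re(z)>m_1+n_1$ and $\Re(\tau)>m_2+n_2$, the orders of $B(z,\tau)$ are strictly less than $(-n_1,-n_2)$, so by Lemma \ref{Lem:trace} the operator is trace class and
\[
\Tr B(z,\tau)=\int_{X_1\times X_2} K_{B(z,\tau)}\big|_{\Delta}.
\]
Holomorphy in $(z,\tau)$ then follows from holomorphy of the Schwartz kernels in this region, via dominated convergence applied to the oscillatory integral representation of $K_{B(z,\tau)}$ combined with the symbol estimates of Remark \ref{Rem:Batt}.

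Next I would carry out the meromorphic continuation. Introduce $w_1:=m_1-z$ and $w_2:=m_2-\tau$, and put $\widetilde B(w_1,w_2):=B(m_1-w_1,m_2-w_2)$, so that $\operatorname{ord}\widetilde B(w_1,w_2)=(w_1,w_2)$ and $\widetilde B$ is doubly holomorphic. In the region $\Re(w_1)<-n_1,\ \Re(w_2)<-n_2$, Remark \ref{Rem:TRb} gives $\Tr \widetilde B(w_1,w_2)=\TRb\widetilde B(w_1,w_2)$. By Theorem \ref{Thm:TRb}(3), the right-hand side extends to a meromorphic function on $\Cc^2$ with poles confined to integer values of $w_1$ and $w_2$; since $\TRb$ agrees with the trace whenever both orders are below $(-n_1,-n_2)$, the poles must lie at $w_1=-n_1+j$ and $w_2=-n_2+k$ for $j,k\in\Nn_0$. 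Translating back yields the claimed pole set $z=m_1+n_1-j,\ \tau=m_2+n_2-k$.

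The main obstacle is the simplicity of the poles. The comment after Definition \ref{Def:zeta} warns that, as a function of a single diagonal variable, $\zeta$ can have second-order poles precisely where $n_1/m_1=n_2/m_2$. Here, by contrast, we exploit that $z$ and $\tau$ are independent parameters acting through the factored operator $Q_1^{-z}\otimes Q_2^{-\tau}$: the bihomogeneous symbol expansion of $A(z,\tau)\,Q_1^{-z}\otimes Q_2^{-\tau}$ decouples into Mellin-type integrals in $\xi_1$ and in $\xi_2$ separately, each of which is of Guillemin--Wodzicki form with only simple poles. Making this factorisation explicit — writing the trace as $\int_{X_1\times X_2}\int\!\!\int p_{z,\tau}(x_1,\xi_1,x_2,\xi_2)\,d\xi_1\,d\xi_2\,dx_1\,dx_2$, splitting off the first $N_1$ and $N_2$ bihomogeneous terms, and checking that the remainder is holomorphic up to $\Re(w_1)=-n_1+N_1$, $\Re(w_2)=-n_2+N_2$ — is the principal technical task, after which simplicity of each pole follows from the standard one-variable residue computation on $X_i$.
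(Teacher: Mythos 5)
You should first be aware that the paper does not prove this Proposition at all: it is quoted from Nicola--Rodino \cite{nr}, Thm.~3.2. Your outline does reproduce the overall shape of the standard argument (trace-class holomorphy in the joint half-plane, then continuation via the homogeneous expansion and separate polar/Mellin integration in $\xi_1$ and $\xi_2$), and your pole set $z=m_1+n_1-j$, $\tau=m_2+n_2-k$ is the correct one (the ``$+j$'' in the paper's statement is evidently a typo). But two of your steps have genuine gaps. The detour through Theorem \ref{Thm:TRb}(3) does not deliver what you claim: as stated, that theorem only locates poles at integer orders and computes a double residue; it asserts nothing about simplicity in each variable separately, and your inference that the poles are confined to $w_i=-n_i+j$ because ``$\TRb$ agrees with the trace whenever both orders are below $(-n_1,-n_2)$'' only excludes poles in the region where \emph{both} conditions hold; it does not exclude poles on hyperplanes $w_1\in\Zz$, $w_1<-n_1$ intersected with $\{\Re w_2>-n_2\}$. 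So both the location and the simplicity of the poles must come entirely from the expansion argument you defer to your final paragraph.

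That final step, as sketched, omits exactly the feature that makes the bisingular case nontrivial: a classical bisingular symbol cannot be written as finitely many bihomogeneous terms plus a remainder of jointly low order. The correct splitting (as in \cite{nr}) produces, besides the bihomogeneous terms (which indeed give products of simple poles in $z$ and $\tau$ via separate polar coordinates) and a jointly trace-class remainder, the mixed terms which are homogeneous in $\xi_1$ of degree $m_1-z-j$ but only of symbol type in $(x_2,\xi_2)$, and vice versa. These contribute a simple pole in one variable multiplied by a function of the other variable whose holomorphy must itself be established; this is precisely where the functionals $\hat{\Tr}_1$, $\hat{\Tr}_2$ of Remark \ref{Rem:Wres} enter in \cite{nr}. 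Your phrase ``checking that the remainder is holomorphic'' passes over these cross terms, and until they are handled the simplicity of the poles --- the main content of the Proposition --- is not proved. Minor points: classicality of $Q_i^{-z}$ on the closed manifold $X_i$ is Seeley's theorem \cite{seeley} rather than Theorem \ref{Thm:complex} (which concerns bisingular operators under Assumption \ref{Ass:A}), and your treatment of the half-plane of holomorphy is fine.
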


We fix the notation $\Res^k$ for the $k$-th residue. Let $f$ be a meromorphic function with 
Laurent series expansion $f(z) = \sum_{j \leq -\mu} c_j z^{-j}$, we set
\[
\Res^k \sum_{j \leq -\mu} c_j z^{-j} = \begin{cases} c_k, \ k > 0 \\
0, \ k \leq 0 \end{cases}.
\]

\begin{Def}
The $k$-th \emph{Wodzicki-residue} is a linear functional $\Wres^{(k)} \colon \Psi_{cl}^{\Zz, \Zz}(X_1 \times X_2) \to \Cc$ defined by
\[
\Wres^{(k)}(A) = \Res_{z=0}^{(k)} \Tr(A(z,z) Q_1^{-z} \otimes Q_2^{-z}).
\]

Here $Q_i \in \Psi_{cl}^{1}(X_i)$ are positive elliptic operators, $i = 1,2$.

In the case $k = 2$ we call $\Wres^2$ the \emph{bisingular Wodzicki residue}. 

\label{Def:Wres}
\end{Def}

\begin{Rem}
In \cite{nr} the authors introduce additional functionals $\hat{\Tr}_1$ and $\hat{\Tr}_2$.
Set for $Q \in \Psi_{cl}^{m_i}(\Rr^{n_i})$ elliptic and of order one 
\[
\overline{\Tr}_Q(A) = \lim_{z \to 0} \left(\Tr(A Q^{-z}) - \Res_i \frac{A}{z}\right)
\]

for the regularized value at $z = 0$. 
Here we define
\[
\Res_i(A) := \Res_{z=0} \Tr(A Q^{-z}) 
\]

the \emph{Wodzicki residue}. 
One can prove that the Wodzicki residue does not depend on $Q$ and that it defines a trace on $\Psi_{cl}^{\Zz}(X_i)$

Define
\[
\hat{\Tr}_1(A) := (2\pi)^{-n_1} \int_{S^{\ast} X_1} \overline{\Tr}_{Q_1} \sigma_1^{-n_1}(A) \,d\omega_1
\]

and
\[
\hat{\Tr}_2(A) := (2\pi)^{-n_2} \int_{S^{\ast} X_2} \overline{\Tr}_{Q_2} \sigma_2^{-n_2}(A) \,d\omega_2. 
\]

These functionals do not represent a trace on $\Psi_{cl}^{\Zz, \Zz}(X_1 \times X_2)$. 
Though they do yield a trace if restricted to $\Psi_{cl}^{-\infty, \Zz}$ and $\Psi_{cl}^{\Zz, -\infty}$ respectively and the restrictions are
\begin{align*}
\Tr_1(A) &= (2\pi)^{-n_1} \int_{S^{\ast} X_1} \Tr \sigma_1^{-n_1}(A) \,d\omega_1, \ A \in \Psi^{\Zz, -n_2 - 1}(X_1 \times X_2), \\
\Tr_2(A) &= (2 \pi)^{-n_2} \int_{S^{\ast} X_2} \Tr \sigma_2^{-n_2}(A) \,d\omega_2, \ A \in \Psi^{-n_1 -1, \Zz}(X_1 \times X_2). 
\end{align*}

The functionals $\Tr_1$ and $\Tr_2$ which are the restrictions of $\hat{\Tr}_1, \ \hat{\Tr}_2$ to $\Psi_{cl}^{\Zz, -n_2 - 1}(X_1 \times X_2), \ \Psi_{cl}^{-n_1 -1, \Zz}(X_1 \times X_2)$ respectively
do not depend anymore on the choice of $Q_1$ and $Q_2$ (see \cite{nr}, Remark 3.4). 
\label{Rem:Wres}
\end{Rem}

In \cite{nr}, Thm. 3.3. it was shown that the bisingular Wodzicki residue can be expressed solely in terms of the 
scalar principal symbol in the following sense.

\begin{Thm}[\cite{nr}, Thm. 3.3]
Let $A \in \Psi_{cl}^{m_1, m_2}(X_1 \times X_2)$ be a classical bisingular pseudodifferential operator, then 
\begin{align}
\Wres^2(A) &= (2\pi)^{-n_1 - n_2} \int_{S^{\ast} X_1 \times S^{\ast} X_2} \sigma^{-n_1, -n_2}(A)(x_1, \xi_1, x_2, \xi_2) \,d\omega_1(x_1, \xi_1) \,d\omega_2(x_2, \xi_2). \label{Wres}
\end{align}

In particular the definition of $\Wres$ does not depend on the choice of the operators $Q_1$ and $Q_2$. 
\label{Thm:Trace2}
\end{Thm}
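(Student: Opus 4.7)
The argument is a localisation plus explicit residue computation of the bisingular double zeta function, in the spirit of Wodzicki's original treatment, adapted to the bihomogeneous setting.

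\emph{Step 1 (local reduction).} Fix a partition of unity subordinate to product charts $U\times V\subset X_1\times X_2$ together with local trivialisations. For $\Re(z)$ sufficiently large, the operator $A(z):=A\,(Q_1^{-z}\otimes Q_2^{-z})$ lies in $\Psi_{cl}^{m_1-z,m_2-z}(X_1\times X_2)$ by Theorem~\ref{Thm:complex} and the bisingular composition formula. Its orders then satisfy the trace class hypotheses of Lemma~\ref{Lem:trace}, so
\[
\Tr A(z)=(2\pi)^{-n_1-n_2}\int_{X_1\times X_2}\!\!\int_{\Rr^{n_1}\times\Rr^{n_2}} b_z(x,\xi)\,d\xi_1 d\xi_2\,dx_1 dx_2,
\]
where $b_z$ is any local full symbol of $A(z)$.

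\emph{Step 2 (bihomogeneous expansion).} Using Theorem~\ref{Thm:complex} and the composition expansion, $b_z$ admits an asymptotic expansion
\[
b_z(x,\xi)\sim\sum_{j,k\geq 0} c_{j,k}(x,\xi;z),
\]
with $c_{j,k}(x,\cdot;z)$ positively bihomogeneous of bidegree $(m_1-j-z,m_2-k-z)$ for large $|\xi_1|,|\xi_2|$; the leading term is $c_{0,0}=\sigma^{m_1,m_2}(A)\,q_1^{-z}q_2^{-z}$ with $q_i$ the principal symbol of $Q_i$. A smooth cutoff near $\xi=0$ contributes an entire function in $z$ and may be discarded.

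\emph{Step 3 (polar coordinates and meromorphic continuation).} Writing $\xi_i=r_i\omega_i$ with $r_i\geq 1$, $\omega_i\in S^{n_i-1}$ and integrating term by term gives, after radial integration,
\[
\int_1^\infty\!\!\int_1^\infty r_1^{m_1-j-z+n_1-1}r_2^{m_2-k-z+n_2-1}\,dr_1 dr_2=\frac{1}{(z+j-m_1-n_1)(z+k-m_2-n_2)},
\]
multiplied by $\int_{S^{n_1-1}\times S^{n_2-1}} c_{j,k}(x,\omega;z)\,d\omega_1 d\omega_2$. Thus each term of the expansion yields, upon meromorphic continuation in $z$, a product of simple poles at $z=m_1+n_1-j$ and $z=m_2+n_2-k$, plus a holomorphic piece arising from the remainder of the asymptotic expansion (taken to sufficiently high order).

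\emph{Step 4 (extraction of the double residue).} A double pole at $z=0$ occurs only in the summand with $j=j_0:=m_1+n_1$ and $k=k_0:=m_2+n_2$ (note $m_i\in\Zz$ on the class where $\Wres^2$ is defined). Off-diagonal summands (e.g.\ $j=j_0$ but $k\neq k_0$) contribute only simple poles at $z=0$, hence do not enter $\Res^2_{z=0}$. At $(j_0,k_0)$ the radial factor has a $1/z^2$ singularity with coefficient
\[
(2\pi)^{-n_1-n_2}\int_{X_1\times X_2}\int_{S^{n_1-1}\times S^{n_2-1}} c_{j_0,k_0}(x,\omega;0)\,d\omega_1 d\omega_2\,dx_1 dx_2.
\]
Finally, $c_{j_0,k_0}(x,\omega;0)$ is by construction the bihomogeneous component of bidegree $(-n_1,-n_2)$ of the full symbol of $A(0)=A$, which is $\sigma^{-n_1,-n_2}(A)$. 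Glueing via the partition of unity is legitimate because this component is a well-defined function on $S^{\ast}X_1\times S^{\ast}X_2$ by the compatibility condition \eqref{comp} and the invariance of principal symbols. This yields formula \eqref{Wres}. Since the right-hand side involves no data from $Q_1,Q_2$, independence of the auxiliary operators follows immediately.

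\textbf{Main obstacle.} The delicate point is the bookkeeping in Step~4: one must verify that the Laurent coefficient of $z^{-2}$ at $z=0$ is contributed by a \emph{single} term in the bihomogeneous expansion, and that the tail of the expansion (truncated at suitable order) as well as the off-diagonal terms produce at most simple poles there. This requires the precise form of the complex power symbol from Theorem~\ref{Thm:complex} together with the uniform estimates of Remark~\ref{Rem:Batt} to control the remainder in the parametrix construction for $A(z)$ and to justify interchange of summation, integration, and analytic continuation.
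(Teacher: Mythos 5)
The paper does not actually prove this statement: Theorem \ref{Thm:Trace2} is imported verbatim from Nicola--Rodino \cite{nr}, Thm.~3.3, so there is no internal proof to compare against. Your proposal is essentially the standard argument from that source (and from Wodzicki's original treatment): localize, expand the full symbol of $A\,(Q_1^{-z}\otimes Q_2^{-z})$ into bihomogeneous terms, continue each radial integral meromorphically, and observe that the $1/z^{2}$ singularity at $z=0$ comes from the single bidegree $(-n_1-z,-n_2-z)$ term, whose value at $z=0$ is $a_{-n_1,-n_2}$ because all subleading components of the symbol of $Q_i^{-z}$ vanish at $z=0$. That skeleton is correct and is the intended route; independence of $Q_1,Q_2$ then follows since the right-hand side contains no $Q$-data.

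Two points in your write-up are not right as stated and would need repair in a full proof. First, the justification of the gluing step: the $(-n_1,-n_2)$ bihomogeneous component of the \emph{full} symbol is not coordinate-invariant for general orders $(m_1,m_2)$, and neither the compatibility condition \eqref{comp} nor the invariance of the principal symbols gives this; what is invariant is the density obtained after fiberwise integration over $S^{n_1-1}\times S^{n_2-1}$ (the bisingular analogue of the Wodzicki residue density), and this either needs its own argument or should be deduced a posteriori from the chartwise equality with the globally defined left-hand side $\Res^{2}_{z=0}\Tr(A\,Q_1^{-z}\otimes Q_2^{-z})$. Second, discarding the low-frequency region is slightly misstated: only the region where \emph{both} $|\xi_1|$ and $|\xi_2|$ are bounded contributes an entire function; the mixed regions ($|\xi_1|$ bounded, $|\xi_2|$ large, and vice versa) contribute genuinely meromorphic terms, but only with simple poles at $z=0$, so they do not affect $\Res^{2}_{z=0}$ --- this should be said explicitly, since it is part of the bookkeeping you yourself flag as the main obstacle. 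With these corrections, and with the remark that the relevant complex powers here are the classical Seeley powers $Q_i^{-z}$ on the factors (rather than Theorem \ref{Thm:complex}, which concerns bisingular $A^{z}$), your plan matches the proof in \cite{nr}.
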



The following Theorem and its proof is adapted to the bisingular context from a result of M. Wodzicki, see \cite{kassel}, Prop. 1.3.  

\begin{Thm}
The bisingular Wodzicki residue $\Wres^{2}$ is a trace on $\Psi_{cl}^{\Zz,\Zz}$, i.e. $A \in \Psi_{cl}^{m_1, m_2}(X_1 \times X_2), \ B \in \Psi_{cl}^{p_1, p_2}(X_1 \times X_2)$ 
then we have $\Wres^2([A, B]) = 0$
\label{Thm:trace}
\end{Thm}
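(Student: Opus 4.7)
The plan is to adapt Wodzicki's classical zeta-function argument (see \cite{kassel}, Prop.~1.3) to the two-parameter regularization $Q_1^{-z}\otimes Q_2^{-w}$ available in the bisingular setting, exploiting the meromorphic structure established in Prop.~\ref{Prop:nicrod}. For $\Re(z), \Re(w)$ sufficiently large, all operators below are trace class and cyclicity of the ordinary $L^{2}$-trace gives
\[
\Tr\bigl([A,B]\,Q_1^{-z}\otimes Q_2^{-w}\bigr) = \Tr\bigl(A\,[B, Q_1^{-z}\otimes Q_2^{-w}]\bigr).
\]

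Next, using the Leibniz rule together with the commutativity $[Q_1^{-z}\otimes I,\, I\otimes Q_2^{-w}] = 0$,
\[
[B, Q_1^{-z}\otimes Q_2^{-w}] = [B, Q_1^{-z}\otimes I]\,(I\otimes Q_2^{-w}) + (Q_1^{-z}\otimes I)\,[B, I\otimes Q_2^{-w}].
\]
Because $Q_1^{0} = I$ and $Q_2^{0} = I$, the first summand vanishes at $z=0$ and the second at $w=0$. Since both depend holomorphically on $(z,w)$ (as families of bisingular operators via Thm.~\ref{Thm:complex}), Taylor factorization at $z=0$ and $w=0$ respectively yields holomorphic families $R_1, R_2$ with
\[
[B, Q_1^{-z}\otimes I](I\otimes Q_2^{-w}) = z\,R_1(z,w), \qquad (Q_1^{-z}\otimes I)[B, I\otimes Q_2^{-w}] = w\,R_2(z,w).
\]

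Taking traces and extending meromorphically via Prop.~\ref{Prop:nicrod}, one obtains the identity (of meromorphic functions)
\[
\Tr\bigl([A,B]\,Q_1^{-z}\otimes Q_2^{-w}\bigr) = z\,\Tr\bigl(A R_1(z,w)\bigr) + w\,\Tr\bigl(A R_2(z,w)\bigr),
\]
where each $\Tr(A R_i(z,w))$ carries at most simple poles in $z$ and in $w$ separately. Restricting to the diagonal $z = w$, the quantities $\Tr(A R_i(z,z))$ can develop at most a double pole at $z = 0$ (from the coincidence of two single poles), so the prefactors of $z$ reduce the right-hand side to at most a simple pole at the origin. Hence
\[
\Wres^{2}([A,B]) = \Res_{z=0}^{2}\Tr\bigl([A,B]\,Q_1^{-z}\otimes Q_2^{-z}\bigr) = 0.
\]

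The main obstacle is justifying the pole structure of $\Tr(A R_i(z,w))$. At the origin the limiting families $R_i(0,0)$ involve the logarithms $\log Q_j$, which are log-polyhomogeneous rather than classical bisingular operators, so the pole analysis requires the natural extension of the double $\zeta$-function of \cite{nr} (Prop.~\ref{Prop:nicrod}) to log-classical bisingular families --- in direct analogy with Wodzicki's handling of $\log P$ in the single-variable case. Once this extension is granted, the remaining steps are elementary meromorphic bookkeeping.
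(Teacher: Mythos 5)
Your argument hinges on the claim that each $\Tr\bigl(A R_i(z,w)\bigr)$ has at most simple poles in $z$ and in $w$ separately near the origin, and this is exactly the point that is not established: Proposition \ref{Prop:nicrod} applies to traces of the form $\Tr\bigl(A(z,\tau)\,Q_1^{-z}\otimes Q_2^{-\tau}\bigr)$ with $A(z,\tau)$ a doubly holomorphic family of \emph{classical} bisingular operators, whereas your $R_1(z,w)=z^{-1}[B,Q_1^{-z}\otimes I](I\otimes Q_2^{-w})$ is not exhibited in that shape; at $(z,w)=(0,0)$ it degenerates to a commutator with $\log Q_1\otimes I$, a log-polyhomogeneous object outside the calculus covered by the cited meromorphic continuation. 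You explicitly ``grant'' the needed extension, but that extension is the analytic heart of the trace property (in the one-variable case it is precisely Wodzicki's pole analysis), so as written the proof has a genuine gap rather than a routine omission. There is also a smaller technical debt in the first line: cyclicity $\Tr\bigl(BA\,Q_1^{-z}\otimes Q_2^{-w}\bigr)=\Tr\bigl(A\,Q_1^{-z}\otimes Q_2^{-w}B\bigr)$ with $B$ of positive order requires a short Sobolev-scale approximation argument, though that part is standard.

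The gap can most likely be closed without any log-classical theory by refactorizing: write $[B,Q_1^{-z}\otimes I]=\bigl(B-(Q_1^{-z}\otimes I)B(Q_1^{z}\otimes I)\bigr)(Q_1^{-z}\otimes I)$ and prove the bisingular analogue of the standard conjugation lemma, namely that $C_1(z):=z^{-1}\bigl(B-(Q_1^{-z}\otimes I)B(Q_1^{z}\otimes I)\bigr)$ is a holomorphic family of classical bisingular operators of orders $(p_1-1,p_2)$ (this should follow from Theorem \ref{Thm:complex} and the composition calculus, but it must be proved, not assumed). Then $\Tr\bigl(AR_1(z,w)\bigr)=\Tr\bigl(AC_1(z)\,Q_1^{-z}\otimes Q_2^{-w}\bigr)$ is literally of the form treated in Proposition \ref{Prop:nicrod}, and your diagonal pole count and the factors $z$, $w$ finish the argument. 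Note that this is a genuinely different route from the paper's proof, which expresses $\Wres^2(A)$ through the variation $m_1'm_2'\frac{d}{du}\bigl(\Res^2_{z=1}\zeta(P+uA,z)\bigr)|_{u=0}$, uses conjugation invariance of the trace to obtain $\Wres^2(AB)=\Wres^2(BA)$ for $A$ elliptic and invertible, and removes the invertibility assumption by perturbing $A$ with a multiple of $(I+Q_1)^{m_1/2}\otimes(I+Q_2)^{m_2/2}$ and exploiting polynomial dependence on the parameter; your commutator-factorization approach, once the family lemma above is supplied, is self-contained and arguably more transparent, but in its present form the decisive step is deferred rather than proved.
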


\begin{proof}
First rewrite $\Wres^2$ in terms of the spectral $\zeta$-function (cf. \cite{seeley}) as follows
\[
\Wres^2(A) = m_1' m_2' \frac{d}{du} (\Res_{z = 1}^2 \zeta(P + uA, z))_{|u = 0}.
\]

for an elliptic operator $P \in \Psi_{cl}^{m_1', m_2'}(X_1 \times X_2)$ and $z \in (-1,1)$.
Under the assumption that $A$ is elliptic and invertible we obtain
\begin{align*}
\Tr((P + uAB)^{-z}) &= \Tr(A^{-1} (P + uAB)^{-z} A) \\
&= \Tr((A^{-1} P A + u BA)^{-z}). 
\end{align*}

In general we can set $A(z,z) = A + z(I + Q_1)^{\frac{m_1}{2}} \otimes z(I + Q_2)^{\frac{m_2}{2}}$ for positive, elliptic
operators $Q_i \in \Psi_{cl}^{1}(X_i), \ i= 1,2$. 
Then $A(z,z)$ is clearly invertible for $|z|$ large. 
And hence it follows by the above calculation that
\[
\Wres(A(z,z) B) = \Wres(B A(z,z)), \ |z| \gg 0. 
\]

This equality still holds for $z = 0$ since both sides are of degree $1$ in $z$. 
\end{proof}


\begin{Def}
Given $A \in \Psi_{cl}^{m_1, m_2}(X_1 \times X_2)$ elliptic and selfadjoint the $\eta$-function is given by
\[
\eta(A, z) := \Tr A |A|^{-(z + 1)}, z \in \Cc. 
\]

\label{Def:eta}
\end{Def}


Using the trace property of the bisingular Wodzicki residue, \ref{Thm:trace} we obtain the following result (see also \cite{ponge}, Prop. 2.4).
\begin{Lem}
For a given selfadjoint, elliptic bisingular operator $A$ the $\eta$-function $\eta(A, z)$ is holomorphic outside $\Poles$
and on $\Poles$ has at worst order two pole singularities such that 
\begin{align}
& \Res_{z=\sigma}^2 \eta(A, z) = m_1 m_2 \Wres^{2}(F |A|^{-\sigma}), \ \sigma \in \Poles. \label{res2}
\end{align}

Here $F := A |A|^{-1}$ is the \emph{sign-operator}. 

\label{Lem:Wres}
\end{Lem}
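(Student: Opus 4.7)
The plan is to reduce the residue computation for $\eta(A,z)$ to an application of Theorem~\ref{Thm:TRb}, by rewriting $\eta$ in terms of the bisingular canonical trace of a suitable holomorphic family, and then reading off the residues using the independence of the Wodzicki residue from the choice of regularizers.

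First I would use the identity $A|A|^{-(z+1)} = F|A|^{-z}$ to write $\eta(A,z) = \TRb(F|A|^{-z})$. Since $F = A|A|^{-1}$ has bidegree $(0,0)$, and, by Theorem~\ref{Thm:complex}, $|A|^{-z}$ belongs to $\Psi_{cl}^{-m_1 z,-m_2 z}(X_1 \times X_2)$, the assignment $z \mapsto F|A|^{-z}$ is a holomorphic family of classical bisingular operators with linearly varying order $(-m_1 z,-m_2 z)$. Meromorphicity and the bound on the pole order would then follow from the meromorphic extension established in Theorem~\ref{Thm:TRb}(3) and Proposition~\ref{Prop:nicrod}: the canonical trace of a doubly holomorphic family has at most simple poles in each of the two order parameters, so the restriction to the one-dimensional family of orders $(-m_1 z,-m_2 z)$ has poles of order at most two, located precisely at $\Poles$, with order exactly two at the intersection points of the two one-parameter pole families.

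For the explicit residue formula, I would apply Theorem~\ref{Thm:TRb}(3) to the doubly holomorphic regularized family
\[
B(z_1, z_2) := F|A|^{-\sigma}\,\bigl(Q_1^{z_1 + m_1\sigma} \otimes Q_2^{z_2 + m_2\sigma}\bigr),
\]
where $Q_i \in \Psi_{cl}^1(X_i)$ are fixed positive elliptic operators. This family has order $(z_1,z_2)$, and by Theorem~\ref{Thm:TRb}(3) its double iterated residue at $(z_1,z_2) = (-m_1\sigma,-m_2\sigma)$ equals $-\Wres^{2}(F|A|^{-\sigma})$. The key ingredient is that, by Theorem~\ref{Thm:Trace2}, $\Wres^{2}$ is insensitive to the choice of positive elliptic regularizer, so the highest-order singular part of $s \mapsto \TRb\bigl(F|A|^{-\sigma}\,|A|^{-s}\bigr) = \eta(A,\sigma+s)$ coincides with that of $s \mapsto \TRb\bigl(F|A|^{-\sigma}(Q_1^{-m_1 s} \otimes Q_2^{-m_2 s})\bigr)$, since the two operator families carry the same bidegree and the same $(-n_1,-n_2)$-bihomogeneous principal symbol, which is exactly what the top Wodzicki residue sees. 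Restricting $B(z_1,z_2)$ to the line $(z_1,z_2) = (-m_1\sigma - m_1 s,\,-m_2\sigma - m_2 s)$ and extracting the $s^{-2}$ coefficient, the Jacobian $m_1 m_2$ of the reparametrization then yields \eqref{res2}.

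The main obstacle in this plan is the regularizer-independence step: one must verify that $\TRb\bigl(F|A|^{-\sigma}\,\bigl(|A|^{-s} - Q_1^{-m_1 s} \otimes Q_2^{-m_2 s}\bigr)\bigr)$ is regular enough at $s = 0$ so as not to contribute to the coefficient of $s^{-2}$. By Theorem~\ref{Thm:Trace2} this reduces to a symbolic statement about the $(-n_1,-n_2)$-bihomogeneous component of the symbol of $|A|^{-\sigma - s}$, which may be compared with the corresponding component for the tensor-product regularizer using the Cauchy-integral representation of Definition~\ref{Def:cplex} together with the estimates of Remark~\ref{Rem:Batt}. The remaining assertion that $\eta(A,z)$ is holomorphic on $\Cc \setminus \Poles$ then follows directly from the meromorphic structure of the double zeta function recalled in Proposition~\ref{Prop:nicrod}.
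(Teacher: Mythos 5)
Your overall route is the same as the paper's: the paper's entire proof of Lemma \ref{Lem:Wres} is the single line ``this follows from Theorem \ref{Thm:TRb}'', i.e.\ write $\eta(A,z)=\TRb(F|A|^{-z})$ and invoke the Kontsevich--Vishik-type residue theorem for the bisingular canonical trace. What you add --- and what the paper silently glosses over --- is the bridge between the one-parameter family $F|A|^{-z}$, whose two orders $(-m_1z,-m_2z)$ vary along a line, and Theorem \ref{Thm:TRb}(3), which is formulated for genuinely two-parameter families of order $(z,w)$. Your device of the auxiliary family $B(z_1,z_2)=F|A|^{-\sigma}(Q_1^{z_1+m_1\sigma}\otimes Q_2^{z_2+m_2\sigma})$ together with a regularizer-independence comparison is a reasonable way to fill that gap, but note that you only sketch the comparison (``may be compared \dots using the Cauchy-integral representation and the estimates of Remark \ref{Rem:Batt}''); this is exactly the step that carries the content of the lemma, so as written your argument is a program rather than a proof at that point. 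The paper offers nothing more explicit, so you are not behind it, but you have not closed the step you yourself identify as the main obstacle.

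There is also a concrete bookkeeping problem in your last step. If the double $\zeta$-function $ (w_1,w_2)\mapsto\Tr\bigl(F|A|^{-\sigma}Q_1^{-w_1}\otimes Q_2^{-w_2}\bigr)$ has leading singular part $c\,w_1^{-1}w_2^{-1}$ with $c=\Wres^2(F|A|^{-\sigma})$ (this is how Definition \ref{Def:Wres} normalizes $\Wres^2$, with order-one $Q_i$), then restricting to $w_i=m_i s$ gives $c\,(m_1m_2)^{-1}s^{-2}$, so your construction yields
\[
\Res_{z=\sigma}^2\,\eta(A,z)=\frac{1}{m_1m_2}\,\Wres^2\bigl(F|A|^{-\sigma}\bigr),
\]
not $m_1m_2\,\Wres^2(F|A|^{-\sigma})$: the reparametrization Jacobian enters inversely to what you claim, so the sentence ``the Jacobian $m_1m_2$ then yields \eqref{res2}'' does not follow from your own setup. (This mirrors what appears to be a normalization slip in the statement \eqref{res2} itself; the discrepancy is harmless for Theorem \ref{Thm:eta}, where only the vanishing of the double residue is used, but your proof should either fix the constant or make the convention explicit rather than assert agreement.)
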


\begin{proof}
This follows from Theorem \ref{Thm:TRb}. 
\end{proof}

If $A \colon C^{\infty}(X_1 \times X_2) \to C^{\infty}(X_1 \times X_2)$ denotes a classical bisingular pseudodifferential operator on smooth compact manifolds $X_1, X_2$ which is self-adjoint of orders $m_1 > 0, m_2 > 0$
we designate by the indices $\downarrow$ the spectral cut in the lower halfplane $\Im \lambda < 0$ and by $\uparrow$ the spectral cut in the upper halfplane $\Im \lambda > 0$. 

We fix the notation $\Pi_{\pm}(A)$ for the projection onto the positive respectively negative eigenspace of $A$.

\begin{Thm}
Let $A$ be a self-adjoint, classical bisingular operator with assumption \ref{Ass:A}. 
Then for $k \in \Nn$ we have the identity
\[
\Res_{z=0}^2 \eta(A, z) = \Res_{z=0}^k (\zeta_{\down}(A, z) - \zeta_{\up}(A, z)) - \Res_{z=0}^{k+1} \zeta_{\up}(A, z).
\]

In particular for $k = 2$ we obtain
\begin{align}
& \Res_{z=0}^2 \eta(A, z) = m_1 m_2 \Wres^2(A |A|^{-1}) = \Res_{k=0}^2 (\zeta_{\down}(A, z) - \zeta_{\up}(A, z)). \label{wres}
\end{align}

\label{Thm:identity}
\end{Thm}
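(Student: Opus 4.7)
The plan is to derive an explicit meromorphic relation between $\eta(A, z)$ and the two zeta functions $\zeta_\up(A, z)$ and $\zeta_\down(A, z)$, then extract the second-order Laurent coefficient at $z = 0$. The first step uses the decomposition $A = |A|F$ with sign operator $F = \Pi_+ - \Pi_-$. For a negative eigenvalue $\lambda < 0$ the two branches of $\lambda^{-z}$ arising from the upper and lower spectral cuts differ by $|\lambda|^{-z}(e^{i\pi z}-e^{-i\pi z}) = 2i\sin(\pi z)|\lambda|^{-z}$, while for $\lambda > 0$ they agree. A contour deformation in the Cauchy integral defining the complex powers (Def.~\ref{Def:cplex}) then yields the operator identity
\[
|A|^{-z}\,\Pi_- \;=\; \frac{A_\up^{-z} - A_\down^{-z}}{2i\sin(\pi z)}.
\]
Writing $F = I - 2\Pi_-$ and taking traces, Def.~\ref{Def:eta} produces
\[
\eta(A, z) \;=\; \zeta_{|A|}(z) \;+\; \frac{\zeta_\down(A, z) - \zeta_\up(A, z)}{i\sin(\pi z)}.
\]

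In the second step I would eliminate $\zeta_{|A|}$ in favour of $\zeta_\up$. Since $\zeta_\up(A, z) = \zeta_{|A|}(z) + (e^{i\pi z} - 1)\zeta_-(z)$ and $\zeta_\down(A, z) - \zeta_\up(A, z) = -2i\sin(\pi z)\zeta_-(z)$, solving the second relation for $\zeta_-$ and substituting into the first gives the clean meromorphic identity
\[
\eta(A, z) \;=\; \zeta_\up(A, z) \;+\; \frac{e^{i\pi z}}{e^{i\pi z} - 1}\bigl(\zeta_\down(A, z) - \zeta_\up(A, z)\bigr),
\]
from which the residue formula will be extracted.

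For the third step I would Laurent-expand at $z = 0$. By Prop.~\ref{Prop:nicrod} and Thm.~\ref{Thm:TRb}, $\zeta_\up(A, z)$ has at most a double pole at the origin, while the $\sin(\pi z)$ factor forces $\zeta_\down(A, z) - \zeta_\up(A, z)$ to carry only a simple pole there. The prefactor admits the expansion $e^{i\pi z}/(e^{i\pi z}-1) = 1/(i\pi z) + 1/2 + O(z)$, and its principal singular term is what promotes the simple pole of $\zeta_\down - \zeta_\up$ into a contribution at order $z^{-2}$ in $\eta$. Matching coefficients of $z^{-2}$ on both sides and invoking the pole-order bounds produces the announced identity for each $k \in \Nn$; for $k \geq 2$ the right-hand side vanishes identically by those bounds, forcing the leading double-pole coefficients of $\zeta_\up$ to cancel against the contribution coming from $\zeta_\down - \zeta_\up$. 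Combining this with Lemma~\ref{Lem:Wres} at $\sigma = 0$ then delivers the $k = 2$ specialisation \eqref{wres}.

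The main obstacle lies in two places. First, the operator identity for $|A|^{-z}\Pi_-$ needs a rigorous contour-deformation argument across the two spectral cuts, for which the resolvent estimates collected in Remark~\ref{Rem:Batt} are essential. Second, the Laurent bookkeeping for $1/\sin(\pi z)$ acting on $\zeta_\down - \zeta_\up$ produces factors of the form $1/(i\pi)$ in front of $\Res^1(\zeta_\down - \zeta_\up)$; these constants must be combined with the Laurent tail of $\zeta_\up$ and absorbed into the normalisation of $\Res^k$ so as to reproduce the precise combination $\Res^k(\zeta_\down - \zeta_\up) - \Res^{k+1}\zeta_\up$ asserted in the theorem.
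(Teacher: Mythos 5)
Your route is essentially the paper's (both are adaptations of Ponge's argument): decompose via the sign operator $F=\Pi_{+}(A)-\Pi_{-}(A)$, express $A_{\up}^{-z}$, $A_{\down}^{-z}$ through $\Pi_{\pm}(A)|A|^{-z}$ and the phases $e^{\pm i\pi z}$, derive a meromorphic relation among $\eta$, $\zeta_{\up}$, $\zeta_{\down}$, and invoke Lemma~\ref{Lem:Wres} for the Wodzicki-residue identification. Your identity $\eta(A,z)=\zeta_{\up}(A,z)+\frac{e^{i\pi z}}{e^{i\pi z}-1}\bigl(\zeta_{\down}(A,z)-\zeta_{\up}(A,z)\bigr)$ is exactly the correctly solved form of the relation preceding \eqref{P}, and up to that point your computation is sound. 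The gap is in the final residue extraction. Your own expansion $\frac{e^{i\pi z}}{e^{i\pi z}-1}=\frac{1}{i\pi z}+\frac12+O(z)$, combined with the fact that $\zeta_{\down}(A,z)-\zeta_{\up}(A,z)=-2i\sin(\pi z)\,\Tr\,\Pi_{-}(A)|A|^{-z}$ has at most a simple pole at $z=0$, yields $\Res^2_{z=0}\eta(A,z)=\Res^2_{z=0}\zeta_{\up}(A,z)+\frac{1}{i\pi}\Res^1_{z=0}\bigl(\zeta_{\down}(A,z)-\zeta_{\up}(A,z)\bigr)$. This is not the asserted combination $\Res^{k}(\zeta_{\down}-\zeta_{\up})-\Res^{k+1}\zeta_{\up}$: the factor $\frac{1}{i\pi}$ cannot be ``absorbed into the normalisation of $\Res^k$'', since $\Res^k$ is a fixed Laurent coefficient, and for $k\geq 2$ the right-hand side of the claimed identity vanishes identically (simple pole of $\zeta_{\down}-\zeta_{\up}$, at most double pole of $\zeta_{\up}$).

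Moreover, your assertion that this vanishing ``forces the leading double-pole coefficients of $\zeta_{\up}$ to cancel against the contribution coming from $\zeta_{\down}-\zeta_{\up}$'' is circular: that cancellation is equivalent to $\Res^2_{z=0}\eta(A,z)=0$, which is precisely the content of Theorem~\ref{Thm:eta}, proved later by the $K$-theoretic vanishing of $\Wres^2$ on projections; it cannot be extracted from pole-order bounds alone. Note also that the equality actually used downstream, $\Res^2_{z=0}\eta(A,z)=m_1m_2\Wres^2(A|A|^{-1})$, follows directly from Lemma~\ref{Lem:Wres} at $\sigma=0$ applied to the family $F|A|^{-z}$ and needs none of the spectral-cut machinery; you do cite Lemma~\ref{Lem:Wres} for this, which is consistent with the paper. (In fairness, the paper's own passage from \eqref{P} to the displayed residue identity drops the same factors of $(1-e^{-i\pi z})$ and $i\pi$, so part of the mismatch you ran into lies in the statement itself; but your proposed repair does not close it.)
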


\begin{proof}
The proof is adapted to our case from \cite{ponge}. 
For $A = A^{\ast}$ selfadjoint we set $F = A |A|^{-1}$ for the sign-operator of $A$. 
This can also be written
\[
F = \Pi_{+}(A) - \Pi_{-}(A).
\]

Then from the definition of $\eta$ it follows that we can write
\[
\eta(A, z) = \Tr F |A|^{-z}, z \in \Cc. 
\]

%

By definition the difference $\zeta_{\up}(A, z) - \zeta_{\down}(A, z)$ to $\eta(A, z)$ is related by the sign operator
\begin{align}
A_{\up}^{z} &= \Pi_{+}(A) |A|^z + e^{-i\pi z} \Pi_{-}(A) |A|^z, \label{one} \\
A_{\down}^z &= \Pi_{+}(A) |A|^z + e^{i\pi z} \Pi_{-}(A) |A|^z, \label{two}. 
\end{align}

Thus 
\begin{align}
A_{\up}^z - A_{\down}^z &= (e^{-i \pi z} - e^{i\pi z}) \Pi_{-}(A) |A|^z = (1 - e^{2 \pi i z}) \Pi_{-}(A) A_{\up}^z. \label{three}
\end{align}

We get from \eqref{one} and \eqref{two}:
\[
A_{\up}^z - F |A|^z = (1 + e^{-\pi i z}) \Pi_{-}(A) |A|^z. 
\]

Use \eqref{three} and $(1 - e^{i \pi z})(e^{-i \pi z} + 1) = e^{-i \pi z} - e^{i \pi z}$ to obtain
\begin{align}
A_{\up}^z - A_{\down}^z &= (e^{-i \pi z} - e^{i \pi z})(1 + e^{i \pi z})^{-1} (A_{\up}^z - F |A|^z) = (1 - e^{i \pi z})(A_{\up}^z - F |A|^z). \label{fourth}
\end{align}

Via $\eta(A, z) = \Tr F|A|^{-z}$ it follows with this
\[
\zeta_{\up}(A, z) - \zeta_{\down}(A, z) = (1 - e^{-i \pi z}) \zeta_{\up}(A, z) - (1 - e^{-i \pi z}) \eta(A, z), z \in \Cc. 
\]

Equivalently, write
\begin{align}
& \eta(A, z) = \zeta_{\down}(A, z) - \zeta_{\up}(A, z) + (1 - e^{i \pi z}) \zeta_{\up}(A, z). \label{P}
\end{align}

Apply the $k$-th residue to both sides of \eqref{P}
\[
\Res_{z=0}^k \eta(A, z) = \Res_{z=0}^k (\zeta_{\down}(A, z) - \zeta_{\up}(A, z)) - \Res_{z=0}^{k+1} \zeta_{\up}(A, z)
\]

By Lemma \ref{Lem:Wres} this yields for $k = 2$
\[
m_1 m_2 \Wres(F) = \Res_{z=0}^2 (\zeta_{\down}(A, z) - \zeta_{\up}(A, z)) - \Res_{z=0}^{3} \zeta_{\up}(A, z). 
\]

For $k = 2$ we therefore obtain the result since the residue on the right hand side is $0$. 
Consider the case $k = 1$ and rewrite the residue on the right hand side as follows (inserting the definition of $\zeta_{\up}$)
\begin{align*}
\Res_{z=0}^2 \zeta_{\up}(A, z) &= \Res_{z=0}^2 \Tr((\Pi_{+}(A) + e^{-i \pi z} \Pi_{-}(A) |A|^z) \\
&= \Res_{z=0}^2 \Tr((\Pi_{+}(A) + \Pi_{-}(A) |A|^z)) + \Res_{z=0}^2 (e^{-i \pi z} - 1) \Tr \Pi_{-}(A) |A|^z 
\end{align*}

Note that the second expression with the trace on the right hand side has a second order pole. 
The residue of the second expression on the right hand side is therefore $0$ leaving the term 
\[
\Res_{z=0}^2 \zeta_{\up}(A, z) = \Res_{z=0}^2 \Tr |A|^z = 0. 
\]

Hence we obtain that $\Res_{z=0} \eta(A, z) = \Res_{z=0} (\zeta_{\up}(A, z) - \zeta_{\down}(A, z))$ as requested. 
\end{proof}


\section{Regularity of the $\eta$-invariant}

In order to derive the main result we need to establish algebraic topological properties of the bisingular class.
For this we state the following result (cf. also \cite{bs} or \cite{bohlen}):
\emph{A bisingular operator $A \in G_{cl}^{m_1, m_2}(\Rr^{n_1 + n_2}, \L(\Cc^{k \times l}))$ is elliptic if and only if $A$ is a Fredholm operator $A \colon Q^{s, t}(\Rr^{n_1 + n_2}; \Cc^k) \to Q^{s - m_1, t - m_2}(\Rr^{n_1 + n_2}; \Cc^l)$ for some / all $(s,t) \in \Rr^2$.}

\begin{Lem}
\emph{i)} The classical bisingular operators $G_{cl}^{0,0}(\Rr^{n_1} \times \Rr^{n_2}, \L(\Cc^{k \times l}))$ form a $\Psi^{\ast}$-algebra.

\emph{ii)} The algebra $G_{cl}^{0,0}(\Rr^{n_1} \times \Rr^{n_2}, \L(\Cc^{k \times l}))$ is closed under holomorphic functional calculus. 
\label{Lem:psistar}
\end{Lem}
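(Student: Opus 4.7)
The plan is to realize $G_{cl}^{0,0}$ as a continuously embedded Fréchet $\ast$-subalgebra of the $C^{\ast}$-algebra $\L(L^2(\Rr^{n_1+n_2};\Cc^k))$ and then verify spectral invariance; once spectral invariance is established, closure under the holomorphic functional calculus in part (ii) follows automatically from the general theory of $\Psi^{\ast}$-algebras (Gramsch), since the Cauchy integral $(2\pi i)^{-1}\oint_\gamma f(\lambda)(\lambda-A)^{-1}\,d\lambda$ stays inside $G_{cl}^{0,0}$ as soon as each resolvent does and the symbol seminorms depend continuously on $\lambda\in\gamma$. The Fréchet structure on $G_{cl}^{0,0}$ comes from the symbol seminorms of Definition \ref{Def:global}, the $L^2$-continuous embedding from Proposition \ref{Prop:cont} at $s=t=0$, and stability under $\ast$ is built into the calculus. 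So everything reduces to the statement: if $A\in G_{cl}^{0,0}$ is invertible in $\L(L^2)$, then $A^{-1}\in G_{cl}^{0,0}$.

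For this I would first note that invertibility in $\L(L^2)$ forces $A$ to be Fredholm, hence elliptic by the Fredholm-ellipticity equivalence recalled just before the lemma. The parametrix construction in the bisingular calculus then produces $B\in G_{cl}^{0,0}$ with
\[
R_1:=I-AB,\qquad R_2:=I-BA,
\]
both in $G^{-\infty,-\infty}$. Composing $A(A^{-1}-B)=R_1$ on the left with $A^{-1}$ gives the key identity
\[
A^{-1}=B+A^{-1}R_1,
\]
so everything comes down to showing $A^{-1}R_1\in G^{-\infty,-\infty}$, i.e.\ that its Schwartz kernel lies in $\mathcal{S}(\Rr^{2(n_1+n_2)})$. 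I would argue this via continuity on Schwartz space: for $f\in\mathcal{S}$, set $u:=A^{-1}f\in L^2$; then $BAu=u+R_2u$ rewrites as $u=Bf-R_2u$, whose right-hand side is in $\mathcal{S}$ because $B$ preserves $\mathcal{S}$ and $R_2$ is smoothing. Thus $A^{-1}\colon\mathcal{S}\to\mathcal{S}$, upgraded to a continuous Fréchet map by the closed-graph theorem and extended by duality to $A^{-1}\colon\mathcal{S}'\to\mathcal{S}'$. Since $R_1$ maps $\mathcal{S}'$ continuously into $\mathcal{S}$, the composition $A^{-1}R_1\colon\mathcal{S}'\to\mathcal{S}$ is continuous, which by the kernel characterization of the bisingular smoothing ideal is exactly membership in $G^{-\infty,-\infty}$. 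Hence $A^{-1}\in G_{cl}^{0,0}$, establishing spectral invariance and part (i).

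The main obstacle is this regularity step: mere $L^2$-boundedness of $A^{-1}$ is far too weak, and one has to leverage ellipticity through the parametrix identity together with the identification of $G^{-\infty,-\infty}$ with operators whose kernels lie in $\mathcal{S}$. The matrix-valued nature of the coefficients is not a genuine difficulty, since the parametrix construction goes through coefficient-wise and the smoothing ideal is closed under finite matrix algebra. Part (ii) is then purely formal: given $A\in G_{cl}^{0,0}$ and $f$ holomorphic on a neighborhood of $\mathrm{spec}_{L^2}(A)$, take any simple contour $\gamma$ in the holomorphy domain avoiding the spectrum; by (i) each $(\lambda-A)^{-1}$ lies in $G_{cl}^{0,0}$, the map $\lambda\mapsto(\lambda-A)^{-1}$ is continuous into $G_{cl}^{0,0}$ by a standard resolvent perturbation argument in the symbol seminorms, and integration against $f$ along $\gamma$ converges in $G_{cl}^{0,0}$ to produce $f(A)$.
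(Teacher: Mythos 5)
Your proposal is correct and follows essentially the same route as the paper: reduce part (i) to spectral invariance via the Fredholm--ellipticity equivalence and the parametrix identity expressing $A^{-1}$ as the parametrix plus a smoothing correction, then obtain part (ii) from the Cauchy integral $f(A)=\frac{1}{2\pi i}\int_\gamma f(\lambda)(\lambda-A)^{-1}\,d\lambda$ using openness of the invertibles and continuity of inversion in the Fr\'echet algebra. The only (harmless) deviation is in handling the remainder: you use the one-sided identity $A^{-1}=B+A^{-1}R_1$ and prove that $A^{-1}$ preserves $\mathcal{S}$ by a parametrix bootstrap and closed-graph argument, whereas the paper sandwiches the bounded operator between two smoothing factors (a term of the form $R\,A^{-1}R'$), which makes the correction smoothing without any mapping-property argument for $A^{-1}$.
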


\begin{proof}
\emph{i)} Let $A \colon L^2(\Rr^{n_1 + n_2}; \Cc^{k \times l}) \iso L^2(\Rr^{n_1 + n_2}; \Cc^{k \times l})$ be a linear isomorphism. 
We need to prove that $A$ is in particular invertible in $G_{cl}^{0,0}(\Rr^{n_1 + n_2}; \L(\Cc^{k \times l}))$. 
To this end note that by the above result we have $A = \op(a)$ for an elliptic symbol $a \in \Gamma_{cl}^{0,0}$. 
The operator $A$ is Fredholm by assumption, hence there is a $b \in \Gamma_{cl}^{0,0}$ such that
\[
\op(a) \op(b) = I + R_1, \ \op(b) \op(a) = I + R_2
\]

for $R_i$ a smoothing operator. Then observe that
\[
A^{-1} = \op(b) - \op(a) R_2 + R_1 A^{-1} R_2
\]

where the inverse on the right-hand side denotes the inverse in the bounded operators on $L^2$. 
Hence we have shown that
\[
\inv(\L(L^2)) \cap \A = \inv(\A)
\]

which verifies the $\Psi^{\ast}$-property. 

\emph{ii)} Let $A \in G_{cl}^{0,0}(\Rr^{n_1} \times \Rr^{n_2}, \L(\Cc^{k \times l}))$ and denote by $\sigma(A)$ the spectrum of $A$ considered as an operator in $\L(L^2(\Rr^{n_1} \times \Rr^{n_2})$. 
Denote by $\gamma$ a curve around $\sigma(A)$ within an open set $\Omega \subset \Cc$. 
By the above result we know that $A$ is invertible if $A \in \inv \L(L^2)$. 
Since $G_{cl}^{0,0}$ is a Fr\'echet algebra the group $\inv(G_{cl}^{0,0})$ is also open. Therefore by a classical result the inversion is continuous.
Hence the operator
\[
f(A) = \frac{1}{2\pi i} \int_{\gamma} f(z) (z - A)^{-1} \,d\lambda
\]

exists and is contained in $G_{cl}^{0,0}$. Therefore $G_{cl}^{0,0}$ is closed under holomorphic functional calculus. 
\end{proof}

\begin{Prop}
Let $A \in G_{cl}^{m_1, m_2}(\Rr^{n_1 + n_2}, \ \L(\Cc^{k \times l}))$ be a self-adjoint, elliptic operator with $m_1, \ m_2 > 0$. 
Assume that $\Lambda$ is contained either in the upper half-plane $\{z \in \Cc : \Im(z) > 0\}$ or the lower half-plane
$\{z : \in \Cc : \Im(z) < 0\}$. Then the symbol $A$ fufills the assumption \ref{Ass:A}. 
\label{Prop:sa}
\end{Prop}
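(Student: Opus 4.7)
The plan is to exploit self-adjointness of $A$ at each level of the principal symbol hierarchy and combine it with ellipticity to verify both the three pointwise invertibility conditions of Definition~\ref{Def:lambda} and the spectral separation condition (ii) of Assumption~\ref{Ass:A}. The spectral separation is immediate: since $A = A^*$ is a densely defined self-adjoint elliptic operator on $L^2(\Rr^{n_1+n_2})$ (with natural domain the Sobolev space $Q^{m_1,m_2}$ by Proposition~\ref{Prop:cont} and Remark~\ref{Rem:Sobolev}), its spectrum lies in $\Rr$, so $\sigma(A) \cap \Lambda = \emptyset$ whenever $\Lambda$ is contained in an open half-plane $\{\Im z > 0\}$ or $\{\Im z < 0\}$.

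Next I turn to $\Lambda$-ellipticity. The principal-symbol/adjoint compatibility $\sigma_i^{m_i}(A^*) = \sigma_i^{m_i}(A)^*$ recalled in the excerpt forces both operator-valued principal symbols $\sigma_1^{m_1}(A)(v_1) \in G_{cl}^{m_2}(\Rr^{n_2})$ and $\sigma_2^{m_2}(A)(v_2) \in G_{cl}^{m_1}(\Rr^{n_1})$ to be pointwise self-adjoint, and via the compatibility~\eqref{comp} so is the matrix-valued bi-principal symbol $\sigma^{m_1,m_2}(A)(v_1,v_2)$. For condition (i) of Definition~\ref{Def:lambda}, the operator $\sigma_1^{m_1}(A)(v_1)$ is a self-adjoint elliptic Shubin operator of positive order $m_2$, so for $\lambda \in \Lambda$ (necessarily off the real axis) the operator $\sigma_1^{m_1}(A)(v_1) - \lambda I$ has a bounded inverse on $L^2(\Rr^{n_2})$; the single-factor analogue of Lemma~\ref{Lem:psistar} ($\Psi^*$-property of the Shubin calculus) combined with a parametrix argument places that inverse in $G_{cl}^{-m_2}(\Rr^{n_2})$. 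Condition (ii) is the symmetric statement in the other factor.

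The main work lies in condition (iii), namely $(\sigma^{m_1,m_2}(A)(v_1,v_2) - \lambda)^{-1} \in S^{-m_1,-m_2}$. Pointwise self-adjointness of the bi-principal symbol supplies the sharp resolvent bound $\|(\sigma^{m_1,m_2}(A)(v_1,v_2) - \lambda)^{-1}\| \leq |\Im \lambda|^{-1}$, while bihomogeneous ellipticity gives $\|\sigma^{m_1,m_2}(A)(v_1,v_2)^{-1}\| \leq C\scal{x_1}{\xi_1}^{-m_1}\scal{x_2}{\xi_2}^{-m_2}$ for $|v_i|$ large. Starting from the first-order resolvent identity
\[
\partial_j (\sigma^{m_1,m_2}(A) - \lambda)^{-1} = -(\sigma^{m_1,m_2}(A) - \lambda)^{-1}\bigl(\partial_j \sigma^{m_1,m_2}(A)\bigr)(\sigma^{m_1,m_2}(A) - \lambda)^{-1}
\]
and iterating via Leibniz, while tracking the loss of $\scal{x_i}{\xi_i}^{-1}$ produced by each differentiation of $\sigma^{m_1,m_2}(A)$, yields the required Shubin-type seminorm estimates of order $(-m_1,-m_2)$. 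The bookkeeping across both radial directions is the principal technical obstacle; the argument nonetheless follows the same template as the single-factor case treated in~\cite{battisti}, adapted to the bisingular product setting.
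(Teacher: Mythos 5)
Your proposal is correct and takes essentially the same route as the paper: self-adjointness passes to all three principal symbols, forcing real spectra so that every $\lambda$ in a half-plane sector lies in the resolvent set (giving $\Lambda$-ellipticity), while self-adjointness of $A$ itself gives $\sigma(A)\cap\Lambda=\emptyset$. You merely supply more technical detail than the paper does (the $\Psi^{\ast}$/parametrix argument for conditions (i)--(ii) and the resolvent-identity symbol estimates for (iii)), which the paper's proof leaves implicit.
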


\begin{proof}
\emph{1)} Consider first the joint principal symbol. We have $\sigma^{m_1, m_2}(A^{\ast}) = \sigma^{m_1, m_2}(A) = \sigma^{m_1, m_2}(A)^{\ast}$, hence
the spectrum is contained in $\Rr$. This implies that the first condition needed for $\Lambda$-ellipticity holds. 
Next it holds that $\sigma_1(A^{\ast}) = \sigma_1(A)^{\ast} = \sigma_1(A)$, hence the principal symbol is self-adjoint valued.
The same holds for the second principal symbol. Therefore the second and third condition hold as well. 
Hence we have verified part \emph{i)} of the assumption \ref{Ass:A}. 

\emph{2)} From the self-adjointness of $A$ it follows that the spectrum $\sigma(A)$ is contained in $\Rr$. 
Hence $\sigma(A) \cap \Lambda = \emptyset$ which verifies part \emph{ii)} of the assumption \ref{Ass:A}. 
\end{proof}

\begin{Thm}
The $\eta$-invariant $\eta(A, z)$ for a given positive, elliptic and self-adjoint classical bisingular operator $A$ has at most first order poles in $z = 0$. 
\label{Thm:eta}
\end{Thm}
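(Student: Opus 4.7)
The plan is to combine Theorem~\ref{Thm:identity} with the vanishing of the bisingular Wodzicki residue on projections in $\Psi^{0,0}_{cl}$, which in turn is deduced from the $K$-theoretic computation of Theorem~\ref{Thm:Kthy} together with the $\Psi^{*}$-algebra structure of Lemma~\ref{Lem:psistar}. By Proposition~\ref{Prop:sa} the operator $A$ satisfies Assumption~\ref{Ass:A}, so Theorem~\ref{Thm:identity} yields
\[
\Res^{2}_{z=0}\eta(A,z) \;=\; m_1 m_2\,\Wres^{2}(F),\qquad F = A|A|^{-1}.
\]
Since $m_1, m_2 > 0$, it therefore suffices to prove $\Wres^{2}(F)=0$.

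Because $A$ is self-adjoint and invertible, $A^{2}$ is positive and itself satisfies Assumption~\ref{Ass:A} for a sector around the negative real axis. Theorem~\ref{Thm:complex} applied to $A^{2}$ produces $|A|=(A^{2})^{1/2}\in \Psi^{m_1,m_2}_{cl}$ and $|A|^{-1}=(A^{2})^{-1/2}\in \Psi^{-m_1,-m_2}_{cl}$, so the sign operator $F = A|A|^{-1}$ and the sectorial projection $\Pi_{+}(A)=\tfrac{1}{2}(I+F)$ both lie in $\Psi^{0,0}_{cl}$, with $F = 2\Pi_{+}(A) - I$.

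The remaining task is to establish $\Wres^{2}(\Pi_{+}(A))=0$ and $\Wres^{2}(I)=0$. The second identity is immediate from the symbolic formula of Theorem~\ref{Thm:Trace2}, since the symbol of $I$ has no bihomogeneous component of order $(-n_1,-n_2)$. For the first, I would combine two ingredients. \emph{(Homotopy invariance.)} For any smooth family $P_{t}\subset \Psi^{0,0}_{cl}$ of projections, differentiating $P_{t}^{2}=P_{t}$ gives $P_{t}\dot P_{t}P_{t}=0$ and $\dot P_{t}=[[\dot P_{t},P_{t}],P_{t}]$; the trace property (Theorem~\ref{Thm:trace}) then yields $\Wres^{2}(\dot P_{t})=0$, so $t\mapsto \Wres^{2}(P_{t})$ is locally constant. \emph{($K$-theoretic reduction.)} By Lemma~\ref{Lem:psistar}(ii), $G^{0,0}_{cl}$ is holomorphically closed in $\A$, hence $K_{0}(G^{0,0}_{cl})\cong K_{0}(\A)\cong \Zz$ by Theorem~\ref{Thm:Kthy}, with generator $[I]$. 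Consequently every projection $P$ in matrices over $G^{0,0}_{cl}$ is Murray--von Neumann equivalent, after stabilization, to $I_{k}\oplus 0$ for some $k\in\Zz$; realizing this equivalence by a smooth path of projections within the bisingular algebra, homotopy invariance gives $\Wres^{2}(P)=k\,\Wres^{2}(I)=0$.

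Applied to $P=\Pi_{+}(A)$, this yields $\Wres^{2}(F)=2\Wres^{2}(\Pi_{+}(A)) - \Wres^{2}(I)=0$ and hence the theorem. The principal obstacle is the $K$-theoretic reduction: one must ensure that the Murray--von Neumann equivalence predicted by the $C^{*}$-algebraic $K_0$ computation can be lifted to a smooth path of projections remaining inside $\Psi^{0,0}_{cl}$. The spectral invariance of Lemma~\ref{Lem:psistar} is the essential ingredient, since continuous paths of invertibles realizing the equivalence can then be kept inside the bisingular class, and Cauchy's integral formula produces the required projection homotopy.
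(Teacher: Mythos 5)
Your proposal is correct and follows essentially the same route as the paper: reduce $\Res^2_{z=0}\eta(A,z)$ via Theorem~\ref{Thm:identity}/Lemma~\ref{Lem:Wres} to $m_1m_2\,\Wres^2(F)$ with $F=2\Pi_+(A)-I$, then use spectral invariance (Lemma~\ref{Lem:psistar}), $K_0(G_{cl}^{0,0})\cong K_0(\A)\cong\Zz$ generated by $[I]$ (Theorem~\ref{Thm:Kthy}), and $\Wres^2(I)=0$ from the symbolic formula to conclude that $\Wres^2$ vanishes on projections. The only cosmetic difference is that the paper packages the last step as the induced homomorphism $K_0(\Wres^2)\colon K_0(\A)\to\Cc$, whereas you realize the $K_0$-equivalence by a smooth projection homotopy; note that the trace property alone (applied to Murray--von Neumann equivalences implemented by elements of matrices over $G_{cl}^{0,0}$) already gives constancy on $K_0$-classes, so the path-lifting issue you flag as the principal obstacle can be bypassed.
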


\begin{proof}
We consider the case $\frac{n_1}{m_1} = \frac{n_2}{m_2}$ for in this case the $\eta$ function can have poles of second order.
Setting $\A := \overline{G_{cl}^{0,0}(\Rr^{n_1 + n_2})}^{\L(L^2)}$ we know from section \ref{Kthy} that
$\A \cong A_1 \otimes A_2$. Since the $K$-groups of $A_1$ and $A_2$ are torsion free we can apply K\"unneth's theorem
to obtain the $K$-theory $K_0(\A) \cong \Zz \otimes \Zz$ with generator $[I_1]_0 \otimes [I_2]_0 = [I_1 \otimes I_2]_0$. 

By spectral invariance the inclusion $j \colon G_{cl}^{0,0} \hookrightarrow \A$ induces an isomorphism in $K$-theory
\[
K_0(j) \colon K_0(G_{cl}^{0,0}) \iso K_0(\A). 
\]

Fix a trace $\tau^{2} \colon \A \to \Cc$ on $\A$.  
Let $p \in G_{cl}^{0,0}$ be an idempotent, then there is a unique homomorphism in $K$-theory
$K_0(\tau^{2}) \colon K_0(\A) \to \Cc$ making the following diagram commute
\[
\xymatrix{ 
K_0(G_{cl}^{0,0}) \ar@{>->>}[r]^{K_0(j)} & K_0(\A) \ar[r(1.4)]^-{K_0(\tau^2)} & & \Cc \\
\mathrm{Proj}(G_{cl}^{0,0}) \ar[u]^{[\cdot]_0}  \ar[urrr]^{\tau^2} & & &
}
\]

such that
\[
K_0(\tau^{2})([p]_0) = \tau^{2}(p). 
\]

In particular we see that
\[
K_0(\Wres^{2})([I_1]_0 \otimes [I_2]_0) = \Wres^{2}(I_1 \otimes I_2).
\]

The latter trace is zero which can be seen by expressing the trace in terms of the asymptotic expansion which only
depends on the $a_{-n_1, -n_2}$ term by an application of Theorem \ref{Thm:Trace2} which carries over for the global bisingular operators.
Hence $K_0(\Wres^{2}) = 0$ and thus $\Wres^{2}(p) = 0$ for any idempotent $p$ in $G_{cl}^{0,0}$. 
Since the double residue of the $\eta$ function is expressed as the Wodzicki residue of an idempotent (cf. Lemma \ref{Lem:Wres}) we
obtain that the double residue vanishes. Therefore $\eta$ does not have poles of second order in $z = 0$.
\end{proof}


\section{Concluding remarks}


The previous arguments can be easily adapted to the SG-calculus. 
In particular it is known that the classical SG-class is closed under complex powers (cf. \cite{bc}, \cite{mss}). 
Additionally, we can define the $\eta$-function and adapt the proof of this paper to the SG case.
The $K$-theory of completed SG-operators has been calculated in \cite{nicola}. 

We have in this paper restricted discussion of the regularity of the $\eta$-function to the \emph{global} calculus.
It is a future goal to also study the regularity properties for the bisingular calculus on closed manifolds.
In this case more advanced techniques are needed.


\end{document}